\def\N{\mathbb{N}}
\def\R{\mathbb{R}}
\def\H{\mathbb{H}}
\def\a{\alpha}
\def\k{\kappa}
\def\g{\mathfrak{g}}
\def\p{\mathfrak{p}}
\def\gl{\mathfrak{gl}}
\def\sp{\mathfrak{sp}}
\def\diff{\partial}
\def\codiff{\partial^*}
\def\Abdle{\mathcal{A}}
\def\Dbdle{\mathcal{D}}
\def\Gbdle{\mathcal{G}}
\def\Qbdle{\mathbb{Q}}
\def\Vbdle{\mathcal{V}}
\def\Ric{\mathrm{Ric}}
\def\scal{\mathrm{scal}}
\def\:{\lrcorner}
\def\#{\sharp}
\def\tens{\otimes}
\def\dsum{\oplus}
\def\alt{\wedge}
\def\prod{\times}
\def\isom{\cong}
\theoremstyle{plain}
\newtheorem{Theorem}{Theorem}[section]
\newtheorem{Lemma}[Theorem]{Lemma}
\newtheorem{Corollary}[Theorem]{Corollary}
\newtheorem{Proposition}[Theorem]{Proposition}
\theoremstyle{definition}
\def\no{\noindent}
\begin{document}


\title{Weyl connections and the local sphere theorem for quaternionic contact structures}

\author{Jesse Alt}

\maketitle


\begin{abstract}
We apply the theory of Weyl structures for parabolic geometries developed by A. \v{C}ap and J. Slov\'ak in \cite{CS03} to compute, for a quaternionic contact (qc) structure, the Weyl connection associated to a choice of scale, i.e. to a choice of Carnot-Carath\'eodory metric in the conformal class. The result of this computation has applications to the study of the conformal Fefferman space of a qc manifold, cf. \cite{alt2}. In addition to this application, we are also able to easily compute a tensorial formula for the qc analog of the Weyl curvature tensor in conformal geometry and the Chern-Moser tensor in CR geometry. This tensor agrees with the formula derived via independent methods by S. Ivanov and D. Vasillev in \cite{IVlocalsphere}. However, as a result of our derivation of this tensor, its fundamental properties -- conformal covariance, and that its vanishing is a sharp obstruction to local flatness of the qc structure -- follow as easy corollaries from the general parabolic theory.
\end{abstract}



\section{Introduction}

The quaternionic contact (qc) structures introduced by O. Biquard (cf. \cite{B00}, \cite{IMV07}, and definitions in section 2.2 below) are a type of parabolic geometry. In particular, this means that any qc structure $(M,\Dbdle,\Qbdle,[g])$ admits a regular, normal Cartan geometry of a certain type $(G,P)$ -- i.e. a $P$-principal fiber bundle over $M$ together with a Cartan connection on the total space taking values in the Lie algebra $\g$ of $G$ --, which induces the structure and is canonical in the sense that it is unique up to isomorphism. The advantage of this viewpoint is that a qc structure is identified with its canonical Cartan geometry, and the general theory of parabolic geometries then offers many nice applications deriving from the representation theory of semisimple groups and their parabolic subgroups. For example, the existence of curved Bernstein-Gelfand-Gelfand sequences allows one to algorithmically construct many invariant differential operators (cf. \cite{CSS01}, \cite{CD01}; these are applied to the case of qc structures in \cite{CS}). Some other well-known types of parabolic geometries are conformal semi-Riemannian structures and non-degenerate, partially-integrable CR structures of codimension one.\\

The canonical Cartan geometry allows us to determine whether two qc structures are equivalent. In particular, we know whether a qc structure $(M,\Dbdle,\Qbdle,[g])$ is locally isomorphic to the (flat) homogeneous model. (The compact homogeneous model is the sphere $S^{4n+3} \isom G/P$, considered as the ``boundary at infinity'' of quaternionic projective $(n+1)$-space, while the non-compact homogeneous model is the quaternionic Heisenberg group $G(\H) = \H^n \times \mathrm{Im}(\H)$ endowed with its natural qc structure -- $G(\H)$ is isomorphic as a qc manifold to $S^{4n+3}$ minus a point, via the quaternionic Cayley transform.) By fundamental properties of Cartan geometries (cf. e.g. chapter 5 of \cite{Sha}), $(M,\Dbdle,\Qbdle,[g])$ is locally flat if and only if its Cartan geometry is \emph{flat}, i.e. if and only if the curvature of this Cartan geometry vanishes identically. As our canonical Cartan geometry is of parabolic type, this answer can be improved because we know that the curvature vanishes whenever certain of its components vanish. In fact, in the case of qc structures some computable algebraic facts tell us that a single component determines the vanishing of the entire curvature, and this component corresponds to a conformally covariant tensor $W^{qc(2)} \in \Gamma(\Lambda^2(\Dbdle^*)\tens\Lambda^2(\Dbdle^*))$ with curvature-type symmetries. In this respect, it is analogous to the Weyl curvature tensor in conformal geometry and the Chern-Moser tensor \cite{Chern-Moser} in CR geometry.\\

The above criteria for local flatness have the drawback of being abstract. Given a representative Carnot-Carath\'eodory metric $g \in [g]$ for a qc structure $(M,\Dbdle,\Qbdle,[g])$, we would like to have a formula for representing $W^{qc(2)}$ in terms of $g$ and associated geometric tensors, just like we have such formulae for the Weyl tensor and the Chern-Moser tensor in terms, respectively, of a fixed metric representing the conformal class or a pseudo-hermitian form for the CR structure. One aim of the present text is to derive such a geometric-tensorial formula for the qc case, and the result is given by the formula for $W^{qc(2)}$ in theorem \ref{Wqc curvature tensor}.\\

The formula derived here is equivalent to the one defining the tensor $WR$ in \cite{IVlocalsphere}, which is not a surprise given that the tensor $WR$ is shown in that text (by different means) to be conformally covariant and an obstruction to local flatness. However, our approach to discovering this tensor has the advantage of relating it to a component of the canonical Cartan curvature. In particular, once we have identified it as the representation of this component with respect to a fixed Carnot-Carath\'eodory metric $g$, the properties that it is conformally covariant and gives a sharp obstruction to local flatness both follow as easy corollaries from the general theory of parabolic geometries. By contrast, in \cite{IVlocalsphere} the proof e.g. of conformal covariance begins by referencing a transformation formula under conformal rescaling for the first component of the tensor $WR$, which is derived using a computer program and takes up ten pages!\\

The derivation leading to our theorem \ref{Wqc curvature tensor} is basically an application of the theory of Weyl structures, developed for general parabolic geometries in \cite{CS03}. Indeed, as we explain in section 2.2, a choice of Carnot-Carath\'eodory metric $g \in [g]$ is a global section of a bundle of scales, and thus (see section 2.1) uniquely determines an exact Weyl structure for the canonical Cartan geometry of a qc structure $(M,\Dbdle,\Qbdle,[g])$. The computations in sections 3 and 4 amount to inductively determining components of the pullback of the canonical Cartan connection by via this Weyl structure. In particular, theorem \ref{QC Weyl connection} gives an expression for the Weyl connection with respect to $g$. This result is applied in \cite{alt2} to the characterization of the conformal Fefferman space of a qc structure. In section 2.1 we begin with a review of the basic definitions and properties of parabolic geometries and Weyl structures, with emphasis on the facts needed for our computation. This review is in no sense intended to be comprehensive, as a number of useful and easily accessible overviews exist. In particular, for fuller explanation the reader is referred to section 2 of \cite{CS03}. Section 2.2 of the present text then gives the set-up for the main computations (in sections 3 and 4) by explicitly identifying the generally defined objects of section 2.1 in the case of qc structures.\\

\no \textbf{Acknowledgements:} I am grateful to Andreas \v{C}ap for giving me a primer in the methods of \cite{CS03} for computing Weyl structures. Conversations with Ivan Minchev were very helpful for clarifying some basic concepts about qc structures and quaternionic algebra. The main computations for the results presented here were done at Humboldt University in Berlin, while I was supported by the Schwerpunktprogramm 1154 (Global Differential Geometry) of the Deutsche Forschungsgemeinschaft. The text was completed at the University of the Witwatersrand in Johannesburg, while I was supported by a university postdoctoral research fellowship.\\

\section{Parabolic geometries and Weyl structures}

\subsection{General theory}

A \emph{parabolic geometry} is a Cartan geometry $(\Gbdle,\omega)$ of parabolic type $(G,P)$. That is, we have a $P$-principal fiber bundle $\pi: \Gbdle \rightarrow M$ and a Cartan connection $\omega \in \Omega^1(\Gbdle,\g)$ (by definition, this means $\omega$ restricts pointwise to linear isomorphisms of the tangent space $T_u\Gbdle \isom \g$, is equivariant with respect to the actions of $P$ on $\Gbdle$ and $\g$, and respects the fundamental vector fields on $\Gbdle$ determined by $\p$). Here, $\g$ is the Lie algebra of a (real or complex) semisimple Lie group $G$, and $P \subset G$ is a (closed) parabolic subgroup in the sense of representation theory.\\

In particular, the Lie algebra $\g$ admits a $\vert k \vert$-grading for some $k \in \N$: $\g = \g_{-k} \dsum \ldots \dsum \g_k$ as a vector space, and $[\g_i,\g_j] \subset \g_{i+j}$ for all $-k \leq i,j \leq k$ (where $\g_l := \{0\}$ for $\vert l \vert > k$). Moreover, the Lie algebra of $P$ is the subalgebra $\p := \g_0 \dsum \ldots \dsum \g_k$ and $P$ splits via the Levi decomposition as $P = G_0 \ltimes P_+$, with $G_0$ a reductive subgroup having Lie algebra $\g_0$ and $P_+$ a nilpotent group, normal in $P$, with $P_+ = \mathrm{exp}(\g_1 \dsum \ldots \dsum \g_k)$. We make use of the following standard notation: $\g^i := \g_i \dsum \g_{i+1} \dsum \ldots \dsum \g_k$, $\p_+ := \g^1$ and $\g_- := \g_{-k} \dsum \ldots \dsum \g_{-1}$. Note that $\g_{-}$ is isomorphic to the $P$-module $\g/\p$ as a vector space, which endows $\g_{-}$ with the isomorphic $P$-module structure.\\

Given a parabolic geometry $(\Gbdle,\omega)$ of type $(G,P)$, the filtration $\g^{-k} \supset \ldots \supset \g^{-1}$ induces a filtration $T^{-k}M \supset \ldots \supset T^{-1}M$ of the tangent bundle $TM$, via the isomorphism $TM \isom \Gbdle \times_{\mathrm{Ad}(P)} (\g/\p)$ which holds for any Cartan geometry. Alternatively, for the associated \emph{adjoint bundle} $\Abdle := \Gbdle \times_{\mathrm{Ad}(P)} \g$, the ($\mathrm{Ad}(P)$-invariant) filtration of $\g$ determines a filtration $\Abdle^{-k} \supset \ldots \supset \Abdle^k$ of $\Abdle$, and the isomorphism $TM \isom \Abdle/\Abdle^0$ defines the filtration of $TM$. In both cases, we also have the \emph{associated graded bundles}
\begin{align*}
\mathrm{Gr}(\Abdle) = \Abdle_{-k} \dsum \ldots \dsum \Abdle_{k} \,\, \mathrm{and} \,\,
\mathrm{Gr}(TM) = \mathrm{Gr}_{-k}(TM) \dsum \ldots \dsum \mathrm{Gr}_{-1}(TM),
\end{align*}
\no where $\Abdle_{i} := \Abdle^{i}/\Abdle^{i+1}$ and $\mathrm{Gr}_i(TM) := T^{i}M/T^{i+1}M$.\\

Consider the curvature form $\Omega^{\omega} \in \Omega^2(\Gbdle,\g)$ of $\omega$ defined by the structure equation $$\Omega^{\omega}(u,v) = d\omega(u,v) + [\omega(u),\omega(v)].$$ This can be identified via standard arguments ($\Omega^{\omega}$ is $P$-equivariant and horizontal) either to a $2$-form on $M$ with values in the adjoint bundle  -- $K \in \Omega^2(M,\Abdle)$ -- or to a $P$-equivariant function $\k \in C^{\infty}(\Gbdle,\Lambda^2(\g_{-})^* \tens \g)$. Identification with the latter object determines a well-defined notion of homogeneity components $\k^{(2-k)}, \ldots, \k^{(3k)}$, defined by: $\k^{(l)}(\g_{i},\g_{j}) \subset \g_{l+i+j}.$\\

It is a fact (cf. discussion and reference in section 2.7 of \cite{CS03}) that the curvature $\k$ has strictly positive homogeneity (i.e. $\k^{(l)} = 0$ for all $l \leq 0$, in which case we say $(\Gbdle,\omega)$ is \emph{regular}) if and only if the associated graded tangent bundle $\mathrm{Gr}(TM)$ (with the bracket naturally induced by the Lie bracket of vector fields) is pointwise isomorphic to $\g_-$ as a Lie algebra. Furthermore, we have a $G_0$-principal bundle $\pi_0:\Gbdle_0 \rightarrow M$ defined by $\Gbdle_0 := \Gbdle/P_+$ (this also defines a $P_+$-principal bundle $\pi_+:\Gbdle \rightarrow \Gbdle_0$), and $\omega$ descends to a partial frame form on $\Gbdle_0$, giving a reduction of the frame bundle of $\mathrm{Gr}(TM)$ to the group $G_0$. For a manifold $M$ of the same dimension as $\g_-$, a filtration of $TM$ such that $\mathrm{Gr}(TM)$ is pointwise isomorphic to $\g_-$, along with a reduction of the structure group of $\mathrm{Gr}(TM)$ to a group $G_0$ having Lie algebra $\g_0$, is called a \emph{regular infinitesimal flag structure} of type $(\g,\p)$, and we'll denote such a structure by $(\Gbdle_0,\{T^iM\})$.\\

A regular parabolic geometry $(\Gbdle,\omega)$ of type $(G,P)$ determines a regular infinitesimal flag structure of type $(\g,\p)$ on $M$, and the fundamental theorem of parabolic geometry gives a converse as follows. The curvature function $\k$ on $\Gbdle$ takes values in $\Lambda^2(\g_-)^* \tens \g$, and this space can be identified with $C^2(\g_-,\g)$, the bilinear alternating functions on $\g_-$ with values in $\g$, which is part of the complex $C^*(\g_-,\g)$ of co-chains on $\g_-$ with values in $\g$. A co-differential $\codiff: C^{q+1}(\g_-,\g) \rightarrow C^q(\g_-,\g)$ is determined by the following formula (cf. \cite{Yam} p. 468, where it is attributed to the work of N. Tanaka and B. Kostant):
\begin{align}
(\codiff \varphi)(X_1,\ldots,X_q) =& \sum_{i=1}^n [e^i,\varphi(e_i,X_1,\ldots,X_q)] - \frac{1}{2} \sum_{i=1}^{n}\sum_{j=1}^{q}(-1)^{j}\varphi([e^i,X_j]_-,e_i,X_1,\ldots,\hat{X}_j,\ldots,X_q). \label{codiff formula}
\end{align}
\no Here, $\varphi \in C^{q+1}(\g_-,\g)$ and $X_1,\ldots,X_q \in \g_-$, while $\{e_1,\ldots,e_n\}$ denotes a basis of $\g_-$ and $\{e^1,\ldots,e^n\}$ is a dual basis of $\p_+$ with respect to the Killing form. The notation $X_-$ denotes projection onto $\g_-$ of a vector $X \in \g$. The codifferential $\codiff$ is $P$-equivariant, and allows us to define a parabolic geometry as \emph{normal} precisely when $\codiff \circ \k = 0$ holds for the curvature function. The fundamental theorem of parabolic geometry (cf. corollary 3.23 and theorem 4.5 of \cite{CS99}, which gives a slight generalization of the result originally due to \cite{Tan79}) states that for any regular infinitesimal flag structure on a manifold $M$, there exists a regular, normal parabolic geometry $(\Gbdle,\omega^{nc})$ which induces it. The process by which this parabolic geometry is (abstractly) constructed is an iterated ``prolongation''. The result is unique up to isomorphism in almost all cases, while in the exceptional cases (e.g. projective structures) a choice of first prolongation determines the parabolic geometry uniquely.\\

Now, invariants of the underlying infinitesimal flag structure can be determined from its canonical Cartan geometry $(\Gbdle,\omega^{nc})$. In particular, the underlying structure is locally geometrically isomorphic to the homogeneous model $G/P$ if and only if the Cartan curvature $\k^{nc}$ vanishes identically, i.e. when $(\Gbdle,\omega^{nc})$ is flat. For parabolic geometries this can be refined using algebraically determined limits on the possible values of $\k^{nc} \in C^{\infty}(\Gbdle,C^2(\g_-,\g))$. A differential $\diff: C^q(\g_-,\g) \rightarrow C^{q+1}(\g_-,\g)$ is given by:
\begin{align*}
(\diff \varphi)(X_0,\ldots,X_q) := \sum_{i=0}^q (-1)^i[X_i,&\varphi(X_0,\ldots,\hat{X_i},\ldots,X_q)] \\ &+ \sum_{i<j}(-1)^{i+j}\varphi([X_i,X_j]_-,X_0,\ldots,\hat{X_i},\ldots,\hat{X_j},\ldots,X_q).
\end{align*}
\no Then $\diff$ and $\codiff$ are adjoint with respect to some positive-definite inner-product defined on the $C^q(\g_-,\g)$. The resulting Kostant Laplacian $\Box: C^q(\g_-,\g) \rightarrow C^q(\g_-,\g)$, defined by $\Box = \diff \circ \codiff + \codiff \circ \diff$, is self-adjoint, $G_0$-equivariant, preserves homogeneity, and we have $H^p_l(\g_-,\g) \isom \mathrm{Ker}(\Box_{\vert C^p_l(\g_-,\g)})$. These cohomology groups may be computed algorithmically using Kostant's theory (cf. \cite{Yam}). Together with these algorithms, the following fact about the curvature of a normal parabolic geometry (cf. corollary 4.10 of \cite{CS99}) gives useful simplifications of the criteria for flatness:

\begin{Proposition} \label{dell of lowest} Let $\k^{nc} = \sum_{i=1}^{3k} \k^{nc(i)}$ be the splitting into homogeneity components of the curvature of a regular and normal parabolic geometry. If $\k^{nc(j)} \equiv 0$ for all $j < l$, then we have $\diff \circ \k^{nc(l)} \equiv 0$ and hence, $\k^{nc(l)}(u) \in \mathrm{Ker}(\Box) \isom H^2_l(\g_-,\g)$ for all $u \in \Gbdle$.
\end{Proposition}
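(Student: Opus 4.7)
The plan is to combine the normality hypothesis with the second Bianchi identity of the Cartan connection, and then read off the claim homogeneity by homogeneity.

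First, I would record the purely algebraic consequence of normality: the equation $\codiff \circ \k^{nc} = 0$, viewed as a vanishing of a $P$-equivariant function $\Gbdle \to C^1(\g_-,\g)$, restricts to $\codiff \circ \k^{nc(l)} = 0$ for every $l$, because the formula (\ref{codiff formula}) for $\codiff$ is $G_0$-equivariant and strictly preserves homogeneity. This observation will not enter the argument for $\diff \circ \k^{nc(l)} = 0$, but it is the other half of the input to the Hodge-type identity $\mathrm{Ker}(\Box|_{C^2_l(\g_-,\g)}) \isom H^2_l(\g_-,\g)$ that produces the final conclusion.

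The heart of the proof is the second Bianchi identity $d\Omega^{\omega^{nc}} + [\omega^{nc},\Omega^{\omega^{nc}}] = 0$. I would evaluate this on triples of constant vector fields $\tilde X_0, \tilde X_1, \tilde X_2$ on $\Gbdle$ with $X_i \in \g_-$ and rewrite everything in terms of the curvature function $\k^{nc}$. This yields, schematically,
\begin{equation*}
(\diff \k^{nc})(X_0,X_1,X_2) \;=\; \sum_{\mathrm{cyc}} \Bigl( \tilde X_i(\k^{nc})(X_j,X_k) \;+\; \text{(terms quadratic in } \k^{nc}) \Bigr),
\end{equation*}
where the quadratic terms come from the $[\omega,\Omega^\omega]$ piece of the Bianchi identity together with the correction needed to replace a Lie bracket on $\g$ by its projection $[\cdot,\cdot]_-$ onto $\g_-$ inside the algebraic $\diff$.

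The final step is a homogeneity count. Under the hypothesis $\k^{nc(j)} \equiv 0$ for all $j < l$, every quadratic term on the right-hand side has total homogeneity at least $2l$. Since $(\Gbdle,\omega^{nc})$ is regular we have $l \geq 1$, so $2l > l$ and these terms cannot contribute in homogeneity $l$. The derivative terms $\tilde X_i(\k^{nc(j)})$ vanish identically for $j < l$ (they are derivatives of the zero function), so the only surviving contribution in homogeneity $l$ is the algebraic bracket piece that reassembles, by the defining formula for $\diff$, into $(\diff \k^{nc(l)})(X_0,X_1,X_2)$. This forces $\diff \circ \k^{nc(l)} \equiv 0$; combined with Step 1, $\Box \k^{nc(l)}(u) = 0$ for every $u \in \Gbdle$, and the Kostant--Hodge identification places $\k^{nc(l)}(u)$ in $H^2_l(\g_-,\g)$. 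The main obstacle I anticipate is the bookkeeping in the homogeneity analysis of the derivative term: one must verify that, once all $\k^{nc(j)}$ with $j < l$ vanish, the derivative contribution splits cleanly into a lower-homogeneity part (which vanishes) and an algebraic bracket part (which combines with the rest to give precisely $\diff \k^{nc(l)}$). Fixing a Weyl structure to split $\tilde X_i$ into a horizontal lift plus a vertical correction makes this splitting explicit.
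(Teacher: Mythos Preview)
The paper does not actually prove this proposition; it is quoted as a known fact from \cite{CS99} (corollary 4.10). Your sketch follows exactly the standard argument used there---Bianchi identity plus a homogeneity count---so you are reproducing the reference's approach rather than diverging from it.

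The one point that needs sharpening is your treatment of the derivative term. You note that $\tilde X_i(\k^{nc(j)})$ vanishes for $j<l$, but you do not explain why $\tilde X_i(\k^{nc(l)})$ itself contributes nothing in homogeneity $l$; as written, a reader could object that this term looks like a homogeneity-$l$ contribution and spoils the conclusion. The resolution is simpler than the Weyl-structure splitting you propose: differentiating a function with values in $C^2_m(\g_-,\g)$ along $\omega^{-1}(X_i)$ does not change the target grading, so once you regard the result as a \emph{trilinear} map on $\g_-$ (feeding in $X_i$, which has degree $\leq -1$, as an extra argument) its homogeneity is $m-\deg(X_i)\geq m+1$. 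Thus the homogeneity-$l$ part of the cyclic derivative sum involves only $\k^{nc(m)}$ with $m\leq l-1$, which vanish by hypothesis. No horizontal/vertical decomposition is needed, and there is no ``algebraic bracket part'' of the derivative term that feeds back into $\diff\k^{nc(l)}$ in the way your last paragraph suggests.
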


Weyl structures for general parabolic geometries were defined in \cite{CS03}, generalizing the notion of Weyl structure in conformal geometry: A Weyl structure is a global, $G_0$-equivariant section $\sigma$ of the canonical projection $\pi_+:\Gbdle \rightarrow \Gbdle_0$. By proposition 3.2 of \cite{CS03}, global Weyl structures always exist for parabolic geometries in the real (smooth) category, and they exist locally in the holomorphic category. A choice of Weyl structure $\sigma$ determines a reduction of $\Gbdle$ to the structure group $G_0$, and this may be used to decompose any associated vector bundle into irreducible components with respect to $G_0$. In particular, it determines an isomorphism of the adjoint tractor bundle with its associated graded bundle $\Abdle \isom^{\sigma} \mathrm{Gr}(\Abdle).$\\

Considering the pull-back of the Cartan connection, $\sigma^*\omega$, the $\vert k \vert$-grading of $\g$ gives a decomposition into $G_0$-invariant components, $\sigma^*\omega = \sigma^*\omega_{-k} + \ldots + \sigma^*\omega_k = \sigma^*\omega_- + \sigma^*\omega_0 + \sigma^*\omega_+.$ Since $\sigma$ commutes with fundamental vector fields, it follows from the defining properties of the Cartan connection (cf. 3.3 of \cite{CS03}) that $\sigma^*\omega_i$ is horizontal for all $i \neq 0$, while $\sigma^*\omega_0$ defines a principal $G_0$ connection for $\Gbdle_0 \rightarrow M$. Thus the negative and positive components descend to 1-forms on $M$ with values in the negative and positive components of the graded adjoint bundle, respectively, and we will simply identify them as such. The negative component $\sigma^*\omega_- \in \Omega^1(M;\Abdle_{-k} \dsum \ldots \dsum \Abdle_{-1})$ is called the \emph{soldering form} of $\sigma$, and defines an isomorphism $TM \isom \mathrm{Gr}(TM) \isom \Abdle_-$. The positive component, denoted by $\mathsf{P} := \sigma^*\omega_+ \in \Omega^1(M;\Abdle_1 \dsum \ldots \dsum \Abdle_k)$, is called the \emph{Rho-tensor} and generalizes the Schouten tensor of conformal geometry. The connection $\sigma^*\omega_0 \in \Omega^1(\Gbdle_0,\g_0)$ is called the \emph{Weyl connection}.\\

In the classical theory of Weyl structures for a conformal manifold $(M,[g])$, cf. \cite{Weyl}, a Weyl connection (that is, a torsion-free linear connection $\nabla$ on $M$ preserving the conformal class $[g]$) is actually determined by a relatively small piece of information, namely the connection it induces on the ray bundle $\mathcal{Q} \rightarrow M$ of conformal metrics. An analogous property is established for general parabolic geometries in \cite{CS03}, via the introduction of \emph{scale bundles}, which are distinguished $\R^+$ principal bundles $\mathcal{L}^{\lambda} \rightarrow M$ associated to \emph{scale representations} $\lambda: G_0 \rightarrow \R^+$, defined as follows. A \emph{scaling element} $\varepsilon \in \g_0$ is an element of the center $\mathfrak{z}(\g_0)$ which acts via the adjoint representation by scalar multiplication on each grading component $\g_i$: $[\varepsilon,X] = s_i X$ for all $X \in \g_i$ and $s_i \in \R$. A scale representation associated to $\varepsilon$ is then a representation $\lambda: G_0 \rightarrow \R^+$ such that $\lambda'(A) = B_{\g}(A,\varepsilon)$ holds for the derivative of $\lambda$ and any $A \in \g_0$ (for $B_{\g}$ the Killing form of $\g$), and a scale bundle is any $\R^+$ bundle associated to $\Gbdle_0$ by a scale representation.\\

In section 3 of \cite{CS03}, it is shown that scaling elements always exist, that they induce unique scaling representations (and hence scale bundles), and this allows us to associate a connection form $\omega^{\lambda} \in \Omega^1(\mathcal{L}^{\lambda})$ on a chosen scale bundle to the Weyl connection $\sigma^*\omega_0$ of any Weyl structure $\sigma$ ($\omega^{\lambda}$ is induced by $\lambda' \circ \sigma^*\omega_0 \in \Omega^1(\Gbdle_0)$ under the identification $\mathcal{L}^{\lambda} \isom \Gbdle_0/\mathrm{Ker}(\lambda)$, and we have the corresponding fact for its curvature, cf. lemma 3.8 of \cite{CS03}). In fact, by theorem 3.12 of \cite{CS03}, this gives a bijection between Weyl connections on $\Gbdle_0$ and connection forms on $\mathcal{L}^{\lambda}$, generalizing Weyl's theorem in conformal geometry. This gives a notion of distinguished \emph{closed} and \emph{exact} Weyl structures, whose corresponding connection forms on $\mathcal{L}^{\lambda}$ are flat and globally trivial (i.e. induced by a global section of $\mathcal{L}^{\lambda}$), respectively.\\

It follows that the entire pull-back $\sigma^*\omega \in \Omega^1(\Gbdle_0,\g)$ can in principle be recovered, for an exact Weyl structure, from a global section of $\mathcal{L}^{\lambda}$ which (as we'll see in the next section for qc structures) is generally a relatively simple object. Indeed, section 4 of \cite{CS03} is concerned with characterizing, for a regular, normal parabolic geometry $(\Gbdle,\omega^{nc})$, when a $\g$-valued one-form $\omega \in \Omega^1(\Gbdle_0,\g)$ corresponds to a Weyl structure $\sigma$, i.e. when $\omega = \sigma^*\omega^{nc}$. Using this procedure, it is possible to inductively determine the homogeneity components of $\sigma^*\omega^{nc}$, or equivalently the Weyl structure $\sigma$, from a fixed structure as simple as a global section of $\mathcal{L}^{\lambda}$. We give here the basic notions involved, and the method will be demonstrated in the case of qc structures via the computations in sections 3 and 4 below.\\

A \emph{Weyl form} for a regular infinitesimal flag structure $(\Gbdle_0,\{T^iM\})$, is a $\g$-valued one-form $\omega \in \Omega^1(\Gbdle_0,\g)$ which is $G_0$-equivariant, respects the fundamental vector fields of $\Gbdle_0$, and whose negative components $\omega_- \in \Omega^1(\Gbdle_0,\g_-)$ agree with the partial frame forms on $\Gbdle_0$ defining the reduction of $\mathrm{Gr}(TM)$ to $G_0$. The \emph{Weyl curvature} $W \in \Omega^2(\Gbdle_0,\g)$ of $\omega$ is defined by $W(u,v) = d\omega(u,v) + [\omega(u),\omega(v)]$, and its \emph{total curvature} $K = K_{\leq} + K_+ \in \Omega^2(\Gbdle_0,\g)$ is defined by $K_{\leq}(u,v) = d\omega_{\leq}(u,v) + [\omega_{\leq}(u),\omega_{\leq}(v)]$ and $K_+(u,v) = d\omega_+(u,v) + [\omega_+(u),\omega_+(v)]$, where $\omega_{\leq} = \omega_- + \omega_0$. The following, collecting facts proven in sections 4.2-4.4 of \cite{CS03}, will be essential for our later computations:

\begin{Theorem} \label{Weyl curvature} (\cite{CS03}) Let $(\Gbdle, \omega^{nc})$ be a regular, normal parabolic geometry of type $(G,P)$, $(\Gbdle_0,\{T^iM\})$ the underlying regular infinitesimal flag structure it induces, and $\omega \in \Omega^1(\Gbdle_0,\g)$ a Weyl form. Then $\omega$ is induced by a Weyl structure $\sigma:\Gbdle_0 \rightarrow \Gbdle$ (i.e. $\omega = \sigma^*\omega^{nc}$) if and only if $\codiff W = 0$.\\

\no For an arbitrary Weyl-form $\omega$, we have a natural splitting of $W$ by homogeneity, and $W^{(l)} = 0$ holds for all $l \leq 0$. Identifying both $K$ and $W$ with $\Abdle$-valued two-forms on $M$, and denoting e.g. $K_i$ for the $\Abdle_i$-component, we have the following identity determining $K_i$ for any $i < 0$:
\begin{align}
K_i(u,v) = \nabla_u (\omega_i(v)) - \nabla_v (\omega_i(u)) - \omega_i([u,v]) + \sum_{j,k < 0, j+k=i} \{\omega_j(u),\omega_k(v)\}; \label{total curv neg}
\end{align}
\no and the component $K_0$ is determined by the identities, for any $j = -k,\ldots,k$ and $w \in \Abdle_j$:
\begin{align}
\{K_0(u,v),w\} = R_j(u,v) w. \label{total curv zero}
\end{align}

\no (Here, $\nabla$ denotes the covariant derivative induced on $\Abdle_i$ by $\omega_0$, and $R_j(u,v)$ is the curvature endomorphism on $\Abdle_j$.) Finally, we have: $W(u,v) = K(u,v) + \{\mathsf{P}(u),v\} - \{\mathsf{P}(v),u\}$, where $\mathsf{P} \in \Omega^1(M,\Abdle_+)$ is the tensor corresponding to $\omega_+$. In particular, $W^{(l)} = K^{(l)}$ for all $l \leq 1$.
\end{Theorem}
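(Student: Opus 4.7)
The plan is to handle first the structural and tensorial identities---the formulas for $K_i$ and $K_0$, the relation between $W$ and $K$, and the homogeneity vanishing $W^{(l)} = 0$ for $l \leq 0$---and then to address the equivalence $\omega = \sigma^*\omega^{nc} \Leftrightarrow \codiff W = 0$, which is the real content of the theorem.

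For the curvature identities (\ref{total curv neg}) and (\ref{total curv zero}), I would compute at horizontal lifts of $u, v$ to $\Gbdle_0$ with respect to the principal connection $\omega_0$, so that $\omega_0(u) = \omega_0(v) = 0$. Then $[\omega_{\leq}(u), \omega_{\leq}(v)] = [\omega_-(u), \omega_-(v)]$, whose $\g_i$-component for $i < 0$ is exactly $\sum_{j,k<0,\, j+k=i}\{\omega_j(u), \omega_k(v)\}$. On such lifts, the exterior derivative of the equivariant function $\omega_i$ agrees with the covariant derivative $\nabla$ induced on $\Abdle_i$ by $\omega_0$, so $d\omega_i(u,v) = \nabla_u(\omega_i(v)) - \nabla_v(\omega_i(u)) - \omega_i([u,v])$ upon descent to $M$; combining gives (\ref{total curv neg}). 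For (\ref{total curv zero}), the key algebraic observation is that $[\omega_-(u), \omega_-(v)] \in \g_-$ has no $\g_0$-component, so $K_0 = d\omega_0(u,v) + [\omega_0(u), \omega_0(v)]$; this is exactly the curvature of the principal $G_0$-connection $\omega_0$, whose bracket action on each associated bundle $\Abdle_j$ is precisely the curvature endomorphism $R_j$.

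The formula $W(u,v) = K(u,v) + \{\mathsf{P}(u), v\} - \{\mathsf{P}(v), u\}$ follows by direct expansion: $W - K = [\omega_{\leq}(u), \omega_+(v)] - [\omega_{\leq}(v), \omega_+(u)]$, and on $\omega_0$-horizontal lifts this becomes $[\omega_-(u), \mathsf{P}(v)] - [\omega_-(v), \mathsf{P}(u)] = \{\mathsf{P}(u), v\} - \{\mathsf{P}(v), u\}$ under the soldering identification $TM \isom \Abdle_-$. In particular $W^{(l)} = K^{(l)}$ for all $l \leq 1$, so the vanishing $W^{(l)} = 0$ for $l \leq 0$ reduces to $K^{(l)} = 0$ in those degrees. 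This is the content of regularity: applied to pure-degree arguments, the $l = 0$ part of (\ref{total curv neg}) measures the discrepancy between the Lie bracket on $\mathrm{Gr}(TM)$ and the algebraic bracket on $\g_-$, which vanishes by the regularity hypothesis together with the fact that $\omega_-$ is, by the Weyl-form condition, the partial frame form defining the $G_0$-reduction of $\mathrm{Gr}(TM)$ to a Lie algebra isomorphic to $\g_-$.

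The main obstacle is the equivalence $\omega = \sigma^*\omega^{nc} \Leftrightarrow \codiff W = 0$. The ``only if'' direction is immediate: the Weyl curvature of $\sigma^*\omega^{nc}$ corresponds to $\kappa^{nc} \circ \sigma$, and since $\codiff$ is $G_0$-equivariant and $\codiff \kappa^{nc} = 0$ by normality, we obtain $\codiff W = 0$. For the converse, my plan is to fix an arbitrary Weyl structure $\sigma_0$ with its Weyl form $\omega_0 := \sigma_0^*\omega^{nc}$ (existence in the smooth category coming from proposition 3.2 of \cite{CS03}), and to adjust $\sigma_0$ inductively in homogeneity so that the resulting $\sigma^*\omega^{nc}$ matches $\omega$. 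A deformation of $\sigma_0$ by a $\p_+$-valued function alters $\omega_0$ by a computable algebraic expression, producing corresponding changes in $W$ and $\codiff W$; the conditions $\codiff \sigma_0^*\kappa^{nc} = 0$ (normality) and $\codiff W = 0$ (hypothesis) then impose at each homogeneity degree an equation that uniquely determines the next adjustment, thanks to the $G_0$-equivariant Hodge decomposition of $C^*(\g_-, \g)$ by the Kostant Laplacian $\Box$. This inductive construction is essentially the content of sections 4.2--4.4 of \cite{CS03}, which I would follow in detail.
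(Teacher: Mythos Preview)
The paper does not actually prove this theorem: it is stated as a collection of facts ``proven in sections 4.2--4.4 of \cite{CS03}'' and used as a black box for the subsequent computations. So there is no proof in the paper to compare against; your sketch is essentially a reconstruction of the argument from \cite{CS03}, which is exactly the source the paper cites, and your outline of the tensorial identities (computing on $\omega_0$-horizontal lifts, expanding $W-K$ into the cross-bracket terms, and invoking regularity for the nonpositive homogeneities) and of the inductive normalization for the equivalence $\omega=\sigma^*\omega^{nc}\Leftrightarrow \codiff W=0$ is correct in spirit and matches that reference.

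One small point worth tightening: in your derivation of $W-K$ you write the difference as $[\omega_{\leq}(u),\omega_+(v)]-[\omega_{\leq}(v),\omega_+(u)]$, but expanding $[\omega(u),\omega(v)]-[\omega_{\leq}(u),\omega_{\leq}(v)]-[\omega_+(u),\omega_+(v)]$ gives $[\omega_{\leq}(u),\omega_+(v)]+[\omega_+(u),\omega_{\leq}(v)]$; the sign works out the same after antisymmetry, but as written the step is a bit loose. Also, your reduction of $W^{(l)}=0$ for $l\leq 0$ to ``regularity'' is slightly telegraphic: strictly speaking one checks that the homogeneity-zero part of $K$ on pure-degree inputs compares the Levi bracket on $\mathrm{Gr}(TM)$ with the algebraic bracket on $\g_-$ (vanishing by the infinitesimal flag hypothesis), while the negative-homogeneity parts vanish because $\omega_-$ agrees with the partial frame forms and hence $\omega_i$ vanishes on $T^{i+1}M$. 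These are minor clarifications; the overall plan is sound and is the same route as \cite{CS03}.
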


\subsection{The flag structure and scale bundle of a qc manifold}

A qc manifold $\mathcal{M} = (M,\Dbdle,\mathbb{Q},[g])$ of dimension $4n+3$ (cf. definitions in \cite{B00}, \cite{IMV07}) is given by the following: $\Dbdle \subset TM$ is a distribution of real rank $4n$ and co-rank $3$; $\Qbdle \rightarrow M$ is a $S^2$-bundle of almost-complex structures on $\Dbdle$, admitting local sections $(I_1,I_2,I_3)$ (i.e. $\Qbdle_x = \{(a_1I_1(x),a_2I_2(x),a_3I_3(x)) \, \vert \, \sum_{i=1}^3a_i^2 =1\}$) which satisfy the quaternionic relations $I_1 \circ I_2 = -I_2 \circ I_1 = I_3$; and $[g]$ is a conformal equivalence class of Carnot-Carath\'eodory metrics on the distribution $\Dbdle$. Moreover, we require that $\Dbdle$ is given as the kernel of locally-defined $1$-forms $\{\eta^1,\eta^2,\eta^3\} = \eta \in \Omega^1(M,\R^3)$ which satisfy the relations
\begin{align}
d\eta^a(u,v) = 2g(I_au,v) \label{qc relation}
\end{align}
\no for any $u,v \in \Dbdle$ and some (uniquely determined) $(I_1,I_2,I_3) \in \Qbdle$, $g \in [g]$. When $n=1$ an additional integrability condition is assumed, which is that the local $1$-forms $\eta$ may be chosen so that $\{d\eta^a_{\vert \Dbdle}\}_{a=1}^3$ form a local, oriented orthonormal basis of $\Lambda^2_+\Dbdle^*$, and local vector fields $\xi_1,\xi_2,\xi_3$ exist which satisfy
\begin{align}
\xi_a \: \eta^b = \delta_a^b \,\, \mathrm{and} \,\, (\xi_a \: d\eta^b)_{\vert \Dbdle} = -(\xi_b \: d\eta^a)_{\vert \Dbdle} \label{integrability}
\end{align}
\no for $a,b=1,2,3$ (for $n > 1$ these vector fields, called \emph{Reeb vector fields}, automatically exist).\\

Now from this standard definition, we determine the regular infinitesimal flag structure of our qc manifold. A depth $2$ foliation of the tangent bundle is given by $T^{-2}M := TM \supset T^{-1}M := \Dbdle$. By the relation (\ref{qc relation}), we see that the associated graded tangent bundle $\mathrm{Gr}(TM)$, endowed with the algebraic bracket induced by the Lie bracket of vector fields, is pointwise isomorphic to the Lie sub-algebra $\g_- \subset \g \isom \sp(n+1,1)$ determined by the $\vert 2 \vert$-grading of $\g$ given by diagonal components in the following matrix representation:
\begin{align}
\g = \{ \left(\begin{array}{ccc}
            a & z & q \\
            \overline{x} & A_0 & -\overline{z}^t \\
            \overline{p} & -x^t & -\overline{a}
                \end{array}\right)  \, \vert \, a \in \H, A_0 \in \sp(n), p,q \in \mathrm{Im}(\H), x,z^t \in \H^n \}. \label{matrix form}
\end{align}
\no Note that we have isomorphisms $\g_{-1} \isom \H^n, \g_0 \isom \R \dsum \sp(1) \dsum \sp(n)$, etc. from this representation. We will also use these identifications for economy of notation, identifying e.g. the matrix in the above form with only $x \neq 0$ with the vector $\overline{x} \in \H^n$. Let $\tilde{G} \isom Sp(n+1,1)$ denote the obvious matrix group with Lie algebra $\g$, and $G := \tilde{G}/\{\pm \mathrm{Id}\}$. In an obvious way, we will also denote by $\tilde{G}_0 \subset \tilde{G}$ ($G_0 \subset G$) and $\tilde{P} \subset \tilde{G}$, etc. the subgroups with Lie algebras $\g_0$ and $\p$. In particular:
\begin{align*}
\tilde{G}_0 = \{ \left(\begin{array}{ccc}
            sz & 0 & 0 \\
            0 & A & 0 \\
            0 & 0 & s^{-1}\overline{z}
                \end{array}\right)  \, \vert \, s \in \R^+, z, z^{-1}=\overline{z} \in Sp(1), A \in Sp(n) \} \isom \R^+ \times Sp(1) \times Sp(n),
\end{align*}
\no and $\tilde{P} = \tilde{G}_0 \ltimes \mathrm{exp}(\p_+)$.\\

Now, a reduction of the structure group of $\mathrm{Gr}(TM)$ to $G_0 \isom CSp(1)Sp(n)$ is given by the principal bundle $\pi_0: \Gbdle_0 \rightarrow M$, with fibers $$(\Gbdle_0)_x := \{(e_1,\ldots,e_{4n}) \,\, \mathrm{symplectic} \, \mathrm{bases} \, \mathrm{of} \, \Dbdle_x \, \mathrm{w.r.t.} \, g_x, \, (I_1,I_2,I_3) \, \vert \, g \in [g], \, (I_1,I_2,I_3) \in \Qbdle_x\}.$$ We specify the $G_0$-action on $\Gbdle_0$ simultaneously with the partial frame-forms $\omega_{-1} \in \Gamma((T^{-1}\Gbdle_0)^* \tens \g_{-1})$, $\omega_{-2} \in \Omega^1(\Gbdle_0,\g_{-2})$, which identify $\Gbdle_0$ as a reduction of the structure group of $\mathrm{Gr}(TM)$ to $G_0$. Given $u = (e_1,\ldots,e_{4n}) \in (\Gbdle_0)_x$ and $\tilde{\xi} \in T_u^{-1}\Gbdle_0 := (T_u \pi_0)^{-1}(\Dbdle_{x})$, we have $\xi := T_u\pi_0(\tilde{\xi}) = \sum_{a=1}^{4n} \xi^a e_a \in \Dbdle_{x}$. Hence, $u$ determines a bijection $[u]:\Dbdle_{x} \rightarrow \H^n$ by $$[u](\xi) = \overline{x} := \sum_{\alpha =1}^n \overline{x_{\alpha}}d_{\alpha} =: \omega_{-1}(\tilde{\xi}) \in \H^n \isom \g_{-1},$$ where $x_{\alpha} := \xi^{4\alpha-3} + i\xi^{4\alpha-2} + j\xi^{4\alpha-1} + k\xi^{4\alpha} \in \H$ and $\{d_1,\ldots,d_n\}$ is the standard basis (over $\H$) of $\H^n$. Now we determine the action of $G_0$ on bases of $\Dbdle_x$ via a preferred representation on $\H^n \isom \g_{-1}$. Starting with $(s,z,A) \in \tilde{G}_0$, let $R_{(s,z,A)}$ act on $\Dbdle_x$ as determined by the following rules: $[u](R_s(e_a)) = s^{-1}[u](e_a)$ (i.e. $R_s(e_a) = s^{-1}e_a$); $[u](R_{\overline{z}}(\xi)) = ([u](\xi))\overline{z}$ for any $\xi \in \Dbdle_{x}$; and $[u](R_A(\xi)) = A([u](\xi))$ for any $\xi \in \Dbdle_{x}$. If we define $R_{(s,z,A)}(u) := (R_{(s,z,A)}(e_1),\ldots,R_{(s,z,A)}(e_{4n}))$, then we see that this preserves fibers $(\Gbdle_0)_x$ since the latter basis of $\Dbdle_x$ is symplectic with respect to $\tilde{g}_x := s^2 g_x$ and $(\tilde{I_1},\tilde{I_2},\tilde{I_3}) := (\mathrm{Ad}_z(I_1),\mathrm{Ad}_z(I_2),\mathrm{Ad}_z(I_3))$, where we identify $\Qbdle_x \isom S^2 = \{q \in \mathrm{Im}(\H) \vert q\overline{q} = 1\}$ and let $Ad_z$ act on the imaginary quaternions of unit length, for $z \in Sp(1) = \{z \in \H \vert z\overline{z} =1\}$, by $Ad_z:q \mapsto zq\overline{z}$. One verifies that the kernel of this action on each fiber is $\{\pm \mathrm{Id} \in \tilde{G}_0\}$, and so $\Gbdle_0$ is a $G_0$ principal bundle. Furthermore, by construction of the $G_0$-action, $\omega_{-1}$ is equivariant with respect to the $G_0$-module $(\H^n,\rho_{-1}) \isom (\g_{-1},\mathrm{Ad}(G_0))$ given by $\rho_{-1}([(s,z,A)])(\overline{x}) = s^{-1}A(\overline{x})\overline{z}$. In particular, this identifies as an associated bundle $\Dbdle \isom \Gbdle_0 \times_{\rho_{-1}} \H^n \isom \Gbdle_0 \times_{\mathrm{Ad}(G_0)} \g_{-1}$. Now we can also define $\omega_{-2} \in \Omega^1(\Gbdle_0,\g_{-2})$ by letting $\omega_{-2}(\tilde{\xi}) := \overline{p} \in \mathrm{Im}(\H) \isom \g_{-2}$ (where $p := \eta^1(\xi)i + \eta^2(\xi)j + \eta^3(\xi)k$ for $\eta = \{\eta^1,\eta^2,\eta^3\}$ any local qc contact form corresponding to $g_x$ and $(I_1,I_2,I_3)$), and translating via $G_0$-equivariance with respect to the representation $\rho_{-2}([s,z,A]): \bar{p} \mapsto s^{-2} z \bar{p} \bar{z}$.\\

We now specify the natural scale bundle of a qc manifold. An obvious scaling element to take is the grading element $\varepsilon_0 = (1,0,0) \in \R \dsum \sp(1) \dsum \sp(n) \isom \g_0$ as in (\ref{matrix form}), i.e.: $$\varepsilon_0 := \left(\begin{array}{ccc} 1 & 0 & 0 \\0 & 0 & 0 \\0 & 0 & -1 \end{array}\right).$$ Clearly, we have $[\varepsilon_0,X] = jX$ for any $X \in \g_j, j=-2,\ldots,2$. Instead of the Killing form on $\g$, we set our conventions by letting $B$ be one-half the real trace form, i.e. $B(X,Y) := 1/2 \mathrm{Re}(\mathrm{tr}(X \circ Y))$ for $X,Y \in \g$. Using this, the requirement $\lambda'(A) = B(A,\varepsilon_0)$, leads us to conclude that $\lambda: [(s,z,A)] \mapsto s$ is the scale representation $\lambda: G_0 \rightarrow \R^+$ corresponding to $\epsilon_0$. Evidently, $$\mathrm{Ker}(\lambda) = \{ \left(\begin{array}{ccc}
            z & 0 & 0 \\
            0 & A & 0 \\
            0 & 0 & \overline{z}
\end{array}\right)/\{\pm \mathrm{Id}\}  \, \vert \, z \in Sp(1), A \in Sp(n) \} \isom Sp(1)Sp(n).$$ We see also that $\mathcal{L}^{\lambda} \isom \Gbdle_0/\mathrm{Ker}(\lambda) \isom \mathcal{Q}$, where $\mathcal{Q} \rightarrow M$ is the bundle of conformal Carnot-Carath\'eodory metrics: $\mathcal{Q}_x := \{g_x \in S^2(\Dbdle_x^*) \, \vert \, g \in [g]\}$, with the $\R^+$-action given by $s.g_x = s^2g_x$.\\

The above shows in particular that any choice $g \in [g]$ of representative Carnot-Carath\'eodory metric in the conformal class, gives us a global section of $\mathcal{L}^{\lambda}$ and hence corresponds to a unique exact Weyl structure $\sigma_g$ for a parabolic geometry $(\Gbdle,\omega)$ inducing $\mathcal{M}$. We always have existence of the canonical parabolic geometry $(\Gbdle,\omega^{nc})$ of type $(G,P)$, which is characterized up to isomorphism by the fact that its curvature $\k^{nc} \in C^{\infty}(\Gbdle,C^2(\g_-,\g))$ has strictly positive homogeneity and satisfies $\codiff \circ \k^{nc} \equiv 0$. In fact, calculations using Kostant's generalization of the Bott-Borel-Weil theorem show that for $n > 1$ the only non-zero component of $H^2(\g_-,\g)$ occurs in homogeneity two and is a submodule of $\Lambda^2(\g_{-1})^* \tens \g_0$. Thus applying proposition \ref{dell of lowest}, we see that $\k^{nc(2)} \in C^{\infty}(\Gbdle,\Lambda^2(\g_{-1})^* \tens \g_0)$ is the first curvature component which might not vanish (in particular, it follows that $\omega^{nc}$ is \emph{torsion-free}, i.e. $\k^{nc}_i \equiv 0$ for all $i < 0$), and this component vanishes if and only if $(\Gbdle,\omega^{nc})$ is flat. This translates via $\sigma_g$ to a tensorial quantity $W^{qc(2)} \in \Gamma(\Lambda^2\Dbdle^* \tens \Abdle_0)$ corresponding to $\sigma_g^*\Omega^{nc(2)}$, which will be computed in the sequel. (For the case $n=1$, $H^2(\g_-,\g)$ has an additional non-vanishing component of homogeneity one. However, the integrability condition (\ref{integrability}) ensures that the curvature component corresponding to this homogeneity automatically vanishes for $n=1$ as well, cf. lemma \ref{total curv homog 1} in the next section, so the above considerations still apply.)\\

Finally, we note for the sake of completeness how, given an exact Weyl structure $\sigma: \Gbdle_0 \rightarrow \Gbdle$, to recover the corresponding Carnot-Carath\'eodory metric: If $\sigma$ is exact then we have a reduction $r: \Gbdle_0^{\mathrm{hol}} \subset \Gbdle_0$ to structure group $\mathrm{Ker}(\lambda)$ corresponding to the reduced holonomy of $\sigma^*\omega_0$. Then for any $x \in M$ and $\xi,\eta \in \Dbdle_x$, choose a $u \in (\Gbdle_0^{\mathrm{hol}})_x$ and $\tilde{\xi},\tilde{\eta} \in T_u\Gbdle_0^{\mathrm{hol}}$ which project to $\xi$ and $\eta$, respectively. Then setting $$g^{\sigma}_x(\xi,\eta) := B(\omega_{-1}(\tilde{\xi}),\overline{\omega_{-1}(\tilde{\eta})}^t),$$ it is not difficult to calculate that $g^{\sigma}_x(\xi,\eta) = \sum_{a=1}^{4n}\xi^a\eta^a$, where $\xi = \sum_{a=1}^{4n}\xi^a e_a$ and $\eta = \sum_{a=1}^{4n}\eta^a e_a$ with respect to $(e_1,\ldots,e_{4n}) = u \in \Gbdle_0^{\mathrm{hol}}$. Hence $g_x^{\sigma}$ it is independent of the choice of point $u$ in the fiber over $x$ and the metric $g^{\sigma}$ thus defined is in the conformal class $[g]$.\\

\section{Computation of the qc Weyl connection}

We carry over the definitions and notation of section 2.2, in particular $\mathcal{M} = (M,\Dbdle,\mathbb{Q},[g])$ is a qc manifold of dimension $4n+3$, assumed integrable in case $n=1$. We denote by $(\Gbdle,\omega^{nc})$ the regular, normal parabolic geometry of type $(G,P)$ inducing $\mathcal{M}$, which we know exists and is unique up to isomorphism, from the general theory. Let $g \in [g]$ be fixed. As shown in section 2.2, $g$ determines a global section of the scale bundle $\mathcal{L}^{\lambda}$ and hence by theorem 3.12 of \cite{CS03} an exact Weyl structure $\sigma_g: \Gbdle_0 \rightarrow \Gbdle$. Our aim in this section is to compute the non-positive (and especially the degree zero) components of the pull-back $\sigma_g^*\omega^{nc} \in \Omega^1(\Gbdle_0,\g)$. Using the method of computation outlined for general parabolic geometries in section 4 of \cite{CS03}, this can be done by starting with a Weyl form $\omega \in \Omega^1(\Gbdle_0,\g)$ (see section 2.1) and inductively adjusting the homogeneity components to get $\omega^{qc}$ such that the Weyl curvature $W^{qc}$ of $\omega^{qc}$ satisfies $\codiff \circ W^{qc} = 0$. This is equivalent, by theorem \ref{Weyl curvature} cited in section 2.1, to $\sigma_g^*\omega^{nc} = \omega^{qc}$. First we identify a convenient set-up for the subsequent computations by fixing the negative components $\omega^{qc}_- \in \Omega^1(\Gbdle_0,\g_-)$ and identifying the graded adjoint bundle $\mathrm{Gr}(\Abdle) = \Gbdle_0 \times_{\mathrm{Ad}(G_0)} \g$ with a more geometrically familiar vector bundle over $\mathcal{M}$.\\

Note that $g$ uniquely determines a complement $\Vbdle$ to $\Dbdle$: By (\ref{integrability}) we have for any local section $(I_1,I_2,I_3)$ of $\Qbdle$ (which together with $g$ determines a local qc contact form $(\eta^1,\eta^2,\eta^3)$ via (\ref{qc relation})) unique Reeb vector fields $\xi_1,\xi_2,\xi_3$ which locally span a linear complement of $\Dbdle$. A transition $(I_1,I_2,I_3) \mapsto (\tilde{I_1},\tilde{I_2},\tilde{I_3})$ to a different local section of $\Qbdle$ is given by a smooth, locally defined $SO(3)$-valued function $\Phi$ on $M$, and the corresponding local qc contact form $(\tilde{\eta}^1,\tilde{\eta}^2,\tilde{\eta}^3)$ which the new local section together with $g$ determines, satisfies $d\tilde{\eta}^a_{\vert \Dbdle} = \sum_{b=1}^3 \Phi^a_b d\eta^b_{\vert \Dbdle}$. Using this, one calculates that the vector fields $\tilde{\xi}_a := \sum_{b=1}^3 \Phi_b^a \xi_b$ for $a=1,2,3$ satisfy the relations (\ref{integrability}) with respect to $\tilde{\eta}$ and thus determine the new Reeb vector fields. In particular, the local linear complement to $\Dbdle$ which they determine is the same as that determined by $\xi_1,\xi_2,\xi_3$, so $TM \isom^g \Vbdle \dsum \Dbdle \isom \mathrm{Gr}(TM)$ is a global decomposition induced by $g$. In general, for $\zeta \in T_xM$ we'll denote by $\zeta_{\Vbdle} + \zeta_{\Dbdle} \in \Vbdle_x \dsum \Dbdle_x$ the projections onto the sub-bundles.\\

Now we can easily define the negative components $\omega^{qc}_- \in \Omega^1(\Gbdle_0,\g_-)$ of the Weyl form associated to $g$. Let $\tilde{\zeta} \in T\Gbdle_0$ be a tangent vector projecting to some $\zeta \in T_xM$ via $T\pi_0$. Then we have $\zeta = \zeta_{\Vbdle} + \zeta_{\Dbdle}$ and $\tilde{\zeta} = \tilde{\zeta}_{\Vbdle} + \tilde{\zeta}_{\Dbdle}$ where $T\pi_0(\tilde{\zeta}_{\Vbdle}) = \zeta_{\Vbdle}$ and $T\pi_0(\tilde{\zeta}_{\Dbdle}) = \zeta_{\Dbdle}$. We let $\omega^{qc}_{-2}(\tilde{\zeta}) := \omega_{-2}(\tilde{\zeta}) = \omega_{-2}(\tilde{\zeta}_{\Vbdle})$ and $\omega^{qc}_{-1}(\tilde{\zeta}) := \omega_{-1}(\tilde{\zeta}_{\Dbdle})$ as defined in section 2.2.\\

Note that $\omega^{qc}_-$ gives us an identification of $TM \isom \Vbdle \dsum \Dbdle \isom \mathrm{Gr}(TM)$ with the associated vector bundle $\Abdle_- = \Gbdle_0 \times_{\mathrm{Ad}(G_0)} \g_-$. The isomorphism $\Dbdle \isom \Gbdle_0 \times_{\rho_{-1}} \H^n \isom \Gbdle_0 \times_{\mathrm{Ad}(G_0)} \g_{-1}$ was already noted in section 2.2, and $\omega^{qc}_{-2}$ clearly induces an isomorphism $\Vbdle \isom \Gbdle_0 \times_{\rho_{-2}} \mathrm{Im}(\H) \isom \Gbdle_0 \times_{\mathrm{Ad}(G_0)} \g_{-2}$. In fact, now we even can identify the whole graded adjoint bundle $\mathrm{Gr}(\Abdle) \isom \Gbdle_0 \times_{\mathrm{Ad}(G_0)} \g$ with a natural vector bundle over $\mathcal{M}$. This is given by:
\begin{align}
\Abdle_{-2} \dsum \Abdle_{-1} \dsum \Abdle_{0} \dsum \Abdle_{1} \dsum \Abdle_{2} &\isom \Vbdle \dsum \Dbdle \dsum \mathrm{End}_0(\Dbdle) \dsum \Dbdle^* \dsum \Vbdle^*, \label{adjoint bdle id}
\end{align}
\no where $\mathrm{End}_0(\Dbdle)_x := \{A = q_0\mathrm{Id} + \sum_{a=1}^3q_aI_a + A_0 \in \mathrm{End}(\Dbdle_x) \, \vert \, q_i \in \R, \, (I_1,I_2,I_3) \in \Qbdle_x, \, A_0 \in \sp(\Dbdle,g)_x\}$. The details of the identifications $\mathrm{End}_0(\Dbdle) \isom \Gbdle_0 \times_{\mathrm{Ad}(G_0)} \g_0$, $\Dbdle^* \isom \Gbdle_0 \times_{\mathrm{Ad}(G_0)} \g_1$ and $\Vbdle^* \isom \Gbdle_0 \times_{\mathrm{Ad}(G_0)} \g_2$, are given in the appendix. This leads to an algebraic commutator defined on $TM \dsum \mathrm{End}_0(\Dbdle) \dsum T^*M$ induced by the Lie bracket of $\g$ (and therefore respecting the grading), which we denote by $\{,\}$ to distinguish it from the Lie bracket of vector fields. The identities for $\{,\}$ are computed in the appendix and given in the formulae (\ref{CommC})--(\ref{CommJ}).\\

From now on, we use the isomorphism (\ref{adjoint bdle id}) to identify the components of any Weyl form $\omega \in \Omega^1(\Gbdle_0,\g)$ by $\omega_{-2} \simeq \theta_{-2} \in \Gamma(T^*M \tens \Vbdle)$, $\omega_{-1} \simeq \theta_{-1} \in \Gamma(T^*M \tens \Dbdle)$, $\omega_0 \simeq \nabla: \Gamma(TM) \rightarrow \Gamma(T^*M \tens TM)$ a covariant derivative on $TM$, etc. In particular, for the negative components $\omega_{-}^{qc}$ already defined above for our fixed Carnot-Carath\'eodory metric $g$, we have $\omega^{qc}_{-2} \simeq \mathrm{pr}_{\Vbdle}$ and $\omega^{qc}_{-1} \simeq \mathrm{pr}_{\Dbdle}$. The algebraic commutator $\{,\}$ also allows us to carry over the codifferential on $C^*(\g_-,\g)$ in a natural way to a linear operator $\codiff: \Omega^q(M;\Abdle) \rightarrow \Omega^{q-1}(M;\Abdle)$ (just substitute $\{,\}$ for $[,]$ and (dual) bases of $TM$ and $T^*M$ in the formula (\ref{codiff formula})). This is what we'll be computing with in the sequel.\\

The degree zero component $\omega^{qc}_0 \simeq \nabla^{qc}$ will be computed in terms of the Biquard connection $\nabla := \nabla^B$, which was discovered by Biquard in \cite{B00} for $n > 1$ and by Duchemin in \cite{Duch} for $n=1$ under the assumption of integrability:

\begin{Theorem} \label{Biquard connection} (\cite{B00},\cite{Duch}) Let $(M,\Dbdle,\mathbb{Q},[g])$ be a qc manifold (integrable in dimension 7). For any $g \in [g]$, there exists a connection $\nabla$ with torsion $T$, uniquely determined by the following conditions:\\
(i) $\nabla$ preserves the decomposition $TM = \Vbdle \dsum \Dbdle$ and the $Sp(1)Sp(n)$ structure on $\Dbdle$, i.e.: $\nabla g = 0$ and $\nabla \mathbb{Q} \subset \mathbb{Q}$;\\
(ii) For all $u, v \in \Dbdle$, we have $T(u,v) = -[u,v]_{\Vbdle}$;\\
(iii) The connection on $\Vbdle$ is induced by the natural identification of $\Vbdle$ with $\sp(1) := \{\sum_{a=1}^3 q_a I_a\} \subset \mathrm{End}_0(\Dbdle)$;\\
(iv) For $\xi \in \Vbdle$, the endomorphism $T_{\xi}:= T(\xi,.)_{\vert \Dbdle} \in \mathrm{End}(\Dbdle)$ lies in $(\sp(1) \dsum \sp(\Dbdle,g))^{\perp} \subset \mathrm{End}(\Dbdle)$.
\end{Theorem}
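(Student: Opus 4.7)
The plan is to establish existence and uniqueness as separate steps, leveraging the parabolic framework already set up in the paper.

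For uniqueness, I would take two connections $\nabla, \nabla'$ satisfying (i)--(iv) and study the difference tensor $A := \nabla' - \nabla$. Condition (i) forces each $A_u$ to preserve the splitting $TM = \Vbdle \dsum \Dbdle$ and to act on $\Dbdle$ via the reductive subalgebra $\sp(1) \dsum \sp(\Dbdle,g) \subset \mathrm{End}(\Dbdle)$; condition (iii) then determines the action on $\Vbdle$ from the $\sp(1)$-part of the action on $\Dbdle$. The torsion difference $T' - T$ equals the antisymmetrization of $A$. For $u,v \in \Dbdle$, condition (ii) yields $A(u,v) = A(v,u)$ in $\Dbdle$; since each $A_u$ restricted to $\Dbdle$ is skew with respect to $g$, a classical Koszul cyclic-permutation argument forces $A(u,v) = 0$. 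For $\xi \in \Vbdle$ and $u \in \Dbdle$, condition (iv) means the difference $A(\xi,u) - A(u,\xi) \in \mathrm{End}(\Dbdle)$ lies in $(\sp(1) \dsum \sp(\Dbdle,g))^{\perp}$; but by (i) and (iii) the same difference already lies in $\sp(1) \dsum \sp(\Dbdle,g)$, so it vanishes, killing the remaining components of $A$.

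For existence I would identify the Biquard connection with the Weyl connection $\nabla^{qc}$ induced by the exact Weyl structure $\sigma_g:\Gbdle_0 \rightarrow \Gbdle$ that $g$ determines, as explained in section 2.2. The principal $G_0$-connection $\sigma_g^*\omega^{nc}_0$ induces a linear connection on every associated vector bundle, in particular on $TM \isom \Abdle_{-2} \dsum \Abdle_{-1}$ via (\ref{adjoint bdle id}). Property (i) follows from exactness: the structure group of $\sigma_g$ reduces further to $\mathrm{Ker}(\lambda) \isom Sp(1)Sp(n)$, which preserves $g$, $\mathbb{Q}$, and the decomposition $\Vbdle \dsum \Dbdle$. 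Property (iii) is then tautological from the matrix representation (\ref{matrix form}), which identifies the $G_0$-modules $\g_{-2}$ and $\sp(1) \subset \g_0$. The torsion of $\nabla^{qc}$ corresponds via (\ref{total curv neg}) to the negative-degree total curvature components of the Weyl form $\omega^{qc} = \sigma_g^*\omega^{nc}$, which coincide with the corresponding components of $\k^{nc}$. The normality condition $\codiff \circ \k^{nc} = 0$, combined with a direct algebraic description of $\mathrm{Ker}(\codiff)$ in the relevant bi-degrees, pins down the torsion so that (ii) and (iv) hold simultaneously.

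The main obstacle is the $n=1$ case, where Reeb vector fields are not produced automatically and the integrability hypothesis (\ref{integrability}) has to be invoked explicitly. In the parabolic language this manifests as an extra homogeneity-one component of $H^2(\g_-,\g)$ appearing only in dimension seven; the condition (\ref{integrability}) is exactly what forces the corresponding homogeneity-one part of $\k^{nc}$ to vanish, ensuring that $\nabla^{qc}$ has its torsion starting in homogeneity two, as demanded by (ii) and (iv). Verifying this --- itself the subject of the next section of the paper --- is the sole point where the two dimensional regimes differ, so the parabolic approach treats them uniformly modulo this single algebraic check.
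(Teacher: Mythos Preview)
The paper does not prove this theorem; it is quoted from \cite{B00} and \cite{Duch} and used as an input for the computations in section 3. So there is no ``paper's own proof'' to compare against, and the relevant question is whether your proposed argument stands on its own.

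Your uniqueness sketch is essentially sound (modulo some imprecise phrasing: for $\xi \in \Vbdle$ and $u \in \Dbdle$ the torsion difference restricted to $\Dbdle$ is just $A_{\xi}(u)$, since $A_u(\xi) \in \Vbdle$ by (i); so the relevant endomorphism is $A_{\xi}$ itself, which lies in $\sp(1)\dsum\sp(\Dbdle,g)$ by (i) and in its orthogonal complement by (iv), hence vanishes).

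Your existence argument, however, has a genuine gap: the Biquard connection is \emph{not} the Weyl connection $\nabla^{qc}$. Theorem \ref{QC Weyl connection} of the paper computes explicitly that $\nabla^{qc} = \nabla + \alpha^{qc}$ with $\alpha^{qc}(\xi_r) = \tfrac{1}{4}(I_r\circ (T^0)^{\sharp} + (T^0)^{\sharp}\circ I_r) + \tfrac{\scal}{32n(n+2)}I_r$, which is generically nonzero and has components in both $\sp(1)$ and $\sp(\Dbdle,g)$. Consequently the torsion endomorphism of $\nabla^{qc}$ in the $\Vbdle$-direction is $T_{\xi_r} + \alpha^{qc}(\xi_r)$ (cf.\ formula (\ref{hom 2 b})), which fails condition (iv). In other words, normality of the Cartan connection does \emph{not} force the torsion into $(\sp(1)\dsum\sp(\Dbdle,g))^{\perp}$; it forces a different algebraic condition ($\codiff$-closedness), and the two are inequivalent. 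There is also a circularity issue: the paper's derivation of $\nabla^{qc}$ in section 3 starts from the Biquard connection and corrects it, so you cannot use that derivation to produce the Biquard connection in the first place. An independent existence proof along the lines of \cite{B00} (building $\nabla$ directly from $g$, the Reeb vector fields, and the Lie derivatives of $g$ along them) is required.
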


We will denote by $\omega_0 = \in \Omega^1(\Gbdle_0,\g_0)$ the connection form inducing $\nabla$ for our fixed Carnot-Carath\'eodory metric $g$. Clearly, the covariant derivative $\nabla^{qc}$ corresponding to the Weyl connection $\omega^{qc}_0 \in \Omega^1(\Gbdle_0,\g_0)$ which we wish to compute, satisfies $\nabla^{qc} = \nabla + \alpha^{qc}$ for some uniquely determined $\alpha^{qc} \in \Omega^1(M;\mathrm{End}_0(\Dbdle))$. In fact, we can restrict our attention to connections of the form $\nabla^{\alpha} = \nabla + \alpha$ with $\alpha \in \Gamma(\Vbdle^* \tens \mathrm{End}_0(\Dbdle))$, thanks to the following:

\begin{Lemma} \label{total curv homog 1} With $\omega_{\leq}^{\alpha} = \omega^{qc}_- + \omega_0^{\alpha}$ given by $\omega_0^{\alpha} \simeq \nabla + \alpha$ for $\alpha \in \Gamma(\Vbdle^* \tens \mathrm{End}_0(\Dbdle))$ as above, we have $K^{\alpha(1)} = W^{\alpha(1)} = 0$. \end{Lemma}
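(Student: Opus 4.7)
By the last statement of Theorem \ref{Weyl curvature}, the homogeneity-one components of $W^{\alpha}$ and $K^{\alpha}$ coincide, so it suffices to verify $K^{\alpha(1)} = 0$. Inspecting the grading shows that the only pieces of the total curvature which can carry homogeneity one are $K^{\alpha}_{-1}$ restricted to $\Dbdle \times \Dbdle$ (the only solution of $-1 = 1 + j + k$ with $j,k < 0$ being $j=k=-1$) and $K^{\alpha}_{-2}$ restricted to $\Dbdle \times \Vbdle$ (the only one of $-2 = 1 + j + k$ being $\{j,k\} = \{-1,-2\}$). I compute each from formula (\ref{total curv neg}) applied to $\omega^{qc}_{-}$ and $\omega^{\alpha}_{0}$.

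The key observation is that $\alpha \in \Gamma(\Vbdle^* \tens \mathrm{End}_0(\Dbdle))$ vanishes identically on $\Dbdle$-arguments, so the covariant derivative $\nabla^{\alpha}$ induced by $\omega_{0}^{\alpha}$ agrees with the Biquard connection $\nabla$ in every $\Dbdle$-direction on every associated bundle: $\nabla^{\alpha}_{u} = \nabla_{u}$ for $u \in \Dbdle$. For $K^{\alpha}_{-1}(u,v)$ with $u,v \in \Dbdle$, using $\omega^{qc}_{-1} \simeq \mathrm{pr}_{\Dbdle}$ and the fact that the $\{,\}$-term of (\ref{total curv neg}) is empty (no admissible $j+k=-1$ with $j,k<0$), the formula reduces to $\nabla_{u} v - \nabla_{v} u - [u,v]_{\Dbdle}$. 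Rewriting the first two terms as $[u,v] + T(u,v)$ and invoking condition (ii) of Theorem \ref{Biquard connection}, $T(u,v) = -[u,v]_{\Vbdle}$, this collapses to zero.

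For the second piece, with $u \in \Dbdle$ and $\xi \in \Vbdle$, the $\{,\}$-term again vanishes, this time because the sole admissible pair $j=k=-1$ forces $\omega^{qc}_{-1}(\xi) = 0$, and (\ref{total curv neg}) becomes $K^{\alpha}_{-2}(u,\xi) = \nabla_{u}\xi - [u,\xi]_{\Vbdle}$. Condition (i) of Theorem \ref{Biquard connection} says $\nabla$ preserves the splitting $TM = \Vbdle \dsum \Dbdle$, so $\nabla_{\xi} u \in \Dbdle$, and the expression above is therefore the $\Vbdle$-projection of the Biquard torsion $T(u,\xi)$. But condition (iv) forces $T(\xi, u) \in \Dbdle$ for $u \in \Dbdle$, whence $T(u,\xi) \in \Dbdle$ and its $\Vbdle$-component vanishes.

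The argument presents no real conceptual obstacle; the task is to keep the grading conventions and the identifications of the $\g_{i}$ with the geometric bundles $\Dbdle, \Vbdle$ straight while invoking the right torsion condition in each case. The only point meriting care is the $n=1$ case, where the integrability hypothesis (\ref{integrability}) is precisely what guarantees the existence of the Biquard connection via Duchemin's result \cite{Duch}; once this is in hand the same calculation applies verbatim, realizing the mechanism anticipated at the end of section 2.2 by which the additional homogeneity-one cohomology component for $n=1$ is automatically killed.
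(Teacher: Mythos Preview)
Your proof is correct and follows essentially the same approach as the paper: split $K^{\alpha(1)}$ into its $K^{\alpha(1)}_{-1}$ and $K^{\alpha(1)}_{-2}$ pieces, compute each from (\ref{total curv neg}), and kill them using the defining properties of the Biquard connection. The treatment of $K^{\alpha(1)}_{-1}$ via property (ii) is identical to the paper's.

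The one noteworthy difference is in the $K^{\alpha(1)}_{-2}$ piece. The paper computes $K^{\alpha(1)}_{-2}(\xi,u) = -\nabla_u\xi - [\xi,u]_{\Vbdle}$ and then invokes the identity $\nabla_u\xi = [u,\xi]_{\Vbdle}$, which it attributes to proposition II.1.9 of \cite{B00} as a consequence of property (iii) of Theorem \ref{Biquard connection}. You instead use property (i) to recognize $\nabla_u\xi - [u,\xi]_{\Vbdle}$ as the $\Vbdle$-projection of the torsion $T(u,\xi)$, and then property (iv) (which asserts $T_\xi \in \mathrm{End}(\Dbdle)$) to conclude this projection vanishes. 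Both routes are valid; yours has the mild advantage of being self-contained rather than appealing to an external reference, while the paper's route makes explicit the formula $\nabla_u\xi = [u,\xi]_{\Vbdle}$, which is of independent interest.
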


\begin{proof} Evidently, $K^{\alpha(1)} = K_{-2}^{\alpha(1)} + K_{-1}^{\alpha(1)}$ with $K_{-2}^{\alpha(1)} \in \Gamma(\Vbdle^* \wedge \Dbdle^* \tens \Vbdle)$ and $K_{-1}^{\alpha(1)} \in \Gamma(\Lambda^2(\Dbdle^*) \tens \Dbdle)$. Using the formula (\ref{total curv neg}), we see for $\xi \in \Vbdle$, $u \in \Dbdle$:
\begin{align*}
K_{-2}^{\alpha(1)}(\xi,u) &= \nabla^{\alpha}_{\xi} u_{\Vbdle} -\nabla^{\alpha}_u \xi_{\Vbdle} - [\xi,u]_{\Vbdle} \\
&= -\nabla_u \xi - [\xi,u]_{\Vbdle}.
\end{align*}

\no And it is shown in proposition II.1.9 of \cite{B00} that $\nabla_u\xi = [u,\xi]_{\Vbdle}$ (this is a result of property (iii) in theorem \ref{Biquard connection}), so we see that $K^{\a (1)}_{-2} = 0$. Now, for the component $K_{-1}^{\alpha(1)}$ of $K^{\alpha(1)}$, a direct calculation as above gives: $K_{-1}^{\alpha(1)}(u,v) = \nabla_u v - \nabla_v u - [u,v]_{\Dbdle}$, which vanishes by property (ii) of theorem \ref{Biquard connection}. Finally, by theorem \ref{Weyl curvature}, we have $W^{\alpha(1)} = K^{\alpha(1)} = 0$. \end{proof}


The next step -- computation of $K^{\alpha(2)}$ and $\codiff K^{\alpha(2)}$ for a certain class of tensors $\alpha$ --, will at the same time determine the tensor $\alpha^{qc} \in \Gamma(\Vbdle^*\tens\mathrm{End}_0(\Dbdle))$ which we are seeking. The strategy for doing this, based on the discussion in section 4 of \cite{CS03}, is as follows. One notes that the formula in theorem \ref{Weyl curvature}, relating the Weyl curvature $W$ and the total curvature $K$, may be written in homogeneity two as $W^{(2)} = K^{(2)} - \diff \mathsf{P}^{(2)}$, where $\diff$ is the operator on $\Abdle$ induced by the Lie algebra differential $\diff: C^q(\g_-,\g) \rightarrow C^{q+1}(\g_-,\g)$ in the same way as with the codifferential $\codiff$. (Note that in our case, $K^{\alpha(2)} = K^{\alpha(2)}_{\leq}$, which only depends on $\omega^{\alpha}_{\leq}$.) Thus, for a fixed $\alpha \in \Gamma(\Vbdle^* \tens \mathrm{End}_0(\Dbdle))$, we have $\codiff W^{\alpha(2)} = 0$ if and only if $\codiff K^{\alpha (2)} = \codiff \diff \mathsf{P}^{(2)}$, which may be rewritten as:
\begin{align}
\codiff K^{\alpha (2)} = \Box \mathsf{P}^{(2)} - \diff \codiff \mathsf{P}^{(2)}. \label{homog 2 eqn}
\end{align}

By the general theory of parabolic geometries, existence and uniqueness of $\alpha$ and $\mathsf{P}^{(2)} \in \Gamma(\Dbdle^* \tens \Dbdle^*)$ solving (\ref{homog 2 eqn}) is guaranteed, however finding explicit solutions can be reduced to solving simpler equations. To see this, denote $\beta := \codiff \mathsf{P}^{(2)} \in \Gamma(\Vbdle^*)$. Considering the restriction of (\ref{homog 2 eqn}) to $\Vbdle$, we see that it's necessary to solve, for arbitrary $\xi \in \Vbdle$:
\begin{align}
(\codiff K^{\alpha (2)})(\xi) + (\diff \beta)(\xi) = 0. \label{homog 2 eqn simp}
\end{align}
\no (This is because the term $\Box \mathsf{P}^{(2)}(\xi)$ vanishes, since $\Box$ acts by scalar multiplication on irreducible $G_0$-modules, so in particular $\Box \mathsf{P}^{(2)} \in \Gamma(\Dbdle^* \tens \Dbdle^*)$ and must vanish on $\Vbdle$.) In fact, it is \textbf{\emph{sufficient}} to find $\alpha, \beta$ which solve (\ref{homog 2 eqn simp}) to determine a solution to (\ref{homog 2 eqn}): Since $\mathrm{Ker}(\Box_{\vert C^1_2(\g_-,\g)}) \isom H^1_2(\g_-,\g) = 0$, $\Box$ has a well-defined inverse and we may define $\mathsf{P}^{(2)} := \Box^{-1}(\codiff K^{\alpha (2)} + \diff \beta)$. By (\ref{homog 2 eqn simp}), this is a section of $\Dbdle^* \tens \Dbdle^*$. Also, from $\mathrm{Ker}(\Box_{\vert C^1_2(\g_-,\g)}) = 0$, the Hodge decomposition becomes $C^1_2(\g_-,\g) = \mathrm{Im}(\diff) \dsum \mathrm{Im}(\codiff)$. Direct from the definition of $\mathsf{P}^{(2)}$, we have $\Box \mathsf{P}^{(2)} = \codiff K^{\alpha (2)} + \diff \beta$. On the other hand, from the definition of $\Box$, we have $\Box \mathsf{P}^{(2)} = \codiff \diff \mathsf{P}^{(2)} + \diff \codiff \mathsf{P}^{(2)}$, and hence $\diff \beta = \diff \codiff \mathsf{P}^{(2)}$ from the Hodge decomposition, and we have a solution to (\ref{homog 2 eqn}).\\

Next we will turn to the computation of explicit tensors $\alpha, \beta$ solving (\ref{homog 2 eqn simp}), essentially by computing $K^{\alpha(2)}$ and $\codiff K^{\alpha(2)}$ for sufficiently general $\alpha \in \Gamma(\Vbdle^*\tens\mathrm{End}_0(\Dbdle))$. Before beginning with the computations, we recall some definitions of geometric tensors associated to the Carnot-Carath\'eodory metric $g$ and its Biquard connection $\nabla$, which were introduced and studied extensively in \cite{IMV07}.\\

The curvature tensor $R$ of $\nabla$ is defined in the usual way: $R(u,v)w = [\nabla_u,\nabla_v]w - \nabla_{[u,v]}w$ as a $(1,3)$-tensor and $R(u,v,w,z) = g(R(u,v)w,z)$ as a $(0,4)$-tensor. The qc-Ricci-tensor $Ric$ is given by $Ric(u,v) = \sum_{a=1}^{4n}R(e_a,u,v,e_a)$ for any $g$-orthonormal local basis $\{e_a\}$ of $\Dbdle$, while the Ricci-type tensors $\tau_s \in \Gamma(\Dbdle^*\tens\Dbdle^*)$ ($s=1,2,3$) are defined for any choice of local section $(I_1,I_2,I_3)$ of $\Qbdle$ (or equivalently any local qc contact form $(\eta^1,\eta^2,\eta^3)$), by: $4n \tau_s(u,v) = \sum_{a=1}^{4n} g(R(e_a,I_s(e_a))u,v)$. The qc scalar curvature of $g$ is defined by $\mathrm{scal} = \sum_{a=1}^{4n} Ric(e_a,e_a) = \sum_{a,b=1}^{4n}R(e_a,e_b,e_b,e_a)$.\\

Two other important tensors $T^0, U \in \Gamma(\Dbdle^*\tens\Dbdle^*)$ are defined in \cite{IMV07} from the torsion $T$ of the Biquard connection: Denoting by $T_{\xi} = T(\xi,.)_{\vert \Dbdle} \in \Gamma(\mathrm{End}(\Dbdle))$ the torsion endomorphism determined by any $\xi \in \Gamma(\Vbdle)$, Biquard showed that $T_{\xi}$ is totally trace-free, i.e. $\sum_{a=1}^{4n}g(T_{\xi}(e_a),e_a) = \sum_{a=1}^{4n}g(T_{\xi} \circ I_s(e_a),e_a) = 0$ for any $(I_1,I_2,I_3) \in \Qbdle$. Decomposing $T_{\xi} = T^0_{\xi} + b_{\xi}$ into its symmetric and anti-symmetric components with respect to $g$, then $T^0_{\xi}$ is traceless and we have $b_{\xi_s} = I_s \circ U^{\sharp}$ for $U^{\sharp} \in \Gamma(\mathrm{End}(\Dbdle))$ a traceless, symmetric $Sp(1)Sp(n)$-invariant endomorphism field which commutes with all $(I_1,I_2,I_3) \in \Qbdle$ (for $n=1$, $U^{\sharp}=0$ and $T_{\xi} = T^0_{\xi}$). In \cite{IMV07}, $T^0$ and $U$ are then defined by $$T^0(u,v) := \sum_{s=1}^3 g(T^0_{\xi_s} \circ I_s(u),v), \,\, U(u,v) := g(U^{\sharp}(u),v),$$ for any $u,v \in \Dbdle$, and it is shown that these are trace-free, symmetric and $Sp(1)Sp(n)$-invariant sections of $\Dbdle^*\tens\Dbdle^*$. Moreover, $T^0 = (T^0)_{[-1]}$ and $U = (U)_{[3]}$, i.e. they belong to the eigenspaces of the eigenvalues $-1$ and $3$, respectively, of the Casimir operator determined by the identities:
\begin{align}
\sum_{s=1}^3T^0(I_s(u),I_s(v)) = -T^0(u,v), \,\,\,\, \sum_{s=1}^3U(I_s(u),I_s(v) = 3U(u,v). \label{sp decomp}
\end{align}
\no The following theorem, derived in \cite{IMV07}, will prove important in our further computations (the form here is extracted from theorem 2.4 of \cite{IVlocalsphere}):

\begin{Theorem} (\cite{IMV07}): On a qc manifold of dimension $4n+3$, for a fixed Carnot-Carath\'eodory metric $g \in [g]$, we have the following identities (with $U=0$ for $n=1$):
\begin{align}
\Ric(u,v) = (2n+2)&T^0(u,v) + (4n+10)U(u,v) + \frac{\scal}{4n}g(u,v); \label{IMV1}\\
\tau_s(u,v) = \frac{n+2}{2n}(T^0(u,&I_sv)-T^0(I_su,v)) + \frac{\scal}{8n(n+2)}g(u,I_sv); \label{IMV3}\\
T(\xi_r,\xi_s) = &-\frac{\scal}{8n(n+2)}\xi_t - [\xi_r,\xi_s]_{\Dbdle}; \label{IMV4}
\end{align}
\end{Theorem}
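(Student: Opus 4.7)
The plan is to derive (\ref{IMV1})--(\ref{IMV4}) by combining the first and second Bianchi identities for the (torsioned) Biquard connection with the decomposition of $(0,2)$-tensors on $\Dbdle$ into irreducible $Sp(1)Sp(n)$-summands. The key structural input is that $\nabla$ preserves both $g$ and $\Qbdle$, so the curvature operator satisfies $R(u,v) \in \sp(1) \dsum \sp(\Dbdle,g)$ for all $u,v \in TM$, while the torsion on vertical slots decomposes as $T_\xi = T^0_\xi + b_\xi$ with $b_{\xi_s} = I_s \circ U^{\sharp}$.

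First I would write the first Bianchi identity for $\nabla$ applied to $u,v,w \in \Dbdle$:
\begin{align*}
\sum_{\mathrm{cyc}} R(u,v)w = \sum_{\mathrm{cyc}} \bigl((\nabla_u T)(v,w) + T(T(u,v),w)\bigr),
\end{align*}
and use property (ii) of theorem \ref{Biquard connection} to rewrite $T(u,v) = -[u,v]_{\Vbdle}$, so that $T(T(u,v),w)$ becomes a torsion evaluated on a vertical/horizontal pair and then expands via $T^0$ and $b_\xi$. Contracting against a $g$-orthonormal basis $\{e_a\}$ of $\Dbdle$ produces $\Ric(u,v)$ on the left and, on the right, $\sum_a(\nabla_{e_a} T)$-type terms plus trace contractions of $T^0$ and $U$. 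The formula (\ref{IMV1}) is then extracted by matching against the $Sp(1)Sp(n)$-isotypic decomposition of the symmetric $Sp(n)$-invariant part of $\Dbdle^*\tens\Dbdle^*$ into $\R g$, the eigenvalue-$3$ module where $U$ lives, and the eigenvalue-$(-1)$ module where $T^0$ lives, cf.\ (\ref{sp decomp}).

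Next, for (\ref{IMV3}) I would compute the $\sp(1)$-type trace $(4n)\tau_s(u,v) = \sum_a g(R(e_a, I_s e_a) u,v)$ by projecting $R$ onto its $\sp(1)$-component (the orthogonal complement of $\sp(\Dbdle,g)$ in $\sp(1) \dsum \sp(\Dbdle,g)$) and applying the Bianchi identity again. The antisymmetric combination $T^0(u,I_s v) - T^0(I_s u,v)$ arises as the $\sp(1)$-equivariant projection of the torsion contraction, while the $g(u,I_s v)$ term comes from the trivial $Sp(1)$-summand and carries the same $\scal$-dependent constant (up to normalization) as in (\ref{IMV1}). For (\ref{IMV4}), I would start from $T(\xi_r,\xi_s) = \nabla_{\xi_r}\xi_s - \nabla_{\xi_s}\xi_r - [\xi_r,\xi_s]$, use property (iii) of theorem \ref{Biquard connection} (the connection on $\Vbdle$ comes from the embedding $\sp(1) \subset \mathrm{End}_0(\Dbdle)$), and then apply the second Bianchi identity with slots $(\xi_r,\xi_s,u)$, contracting $u$ over a $g$-orthonormal basis of $\Dbdle$ to bring in $\Ric$ and thus $\scal$ via (\ref{IMV1}). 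The $-[\xi_r,\xi_s]_{\Dbdle}$ correction appears naturally when separating the vertical and horizontal parts of $[\xi_r,\xi_s]$.

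The main obstacle is pinning down the precise numerical coefficients $(2n+2)$, $(4n+10)$, $(n+2)/(2n)$, and $\scal/(8n(n+2))$. These require careful computation of the $\sp(1)$-Casimir eigenvalues on the relevant $Sp(1)Sp(n)$-irreducibles of $\Dbdle^* \tens \Dbdle^*$, together with simultaneous consistency checks between (\ref{IMV1}) and (\ref{IMV3}) (for instance, by summing $\tau_s(\cdot, I_s\cdot)$ over $s$ and comparing with a partial $\sp(1)$-trace of $\Ric$). The special case $n=1$, where $U \equiv 0$ and $T_\xi = T^0_\xi$, provides a useful sanity check that the $(4n+10)U$ term drops out consistently.
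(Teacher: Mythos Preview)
The paper does not prove this theorem at all: it is quoted from \cite{IMV07}, with the precise form extracted from theorem~2.4 of \cite{IVlocalsphere}, and is invoked only as input for the subsequent computations. So there is no ``paper's own proof'' to compare against.

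Your sketch is a reasonable outline of how the identities are in fact derived in \cite{IMV07}: the first Bianchi identity with torsion, contracted over horizontal frames and combined with the $Sp(1)Sp(n)$-isotypic decomposition of symmetric two-tensors on $\Dbdle$, is exactly the mechanism behind (\ref{IMV1}) and (\ref{IMV3}). One correction on (\ref{IMV4}): the second Bianchi identity is not what is used there. In \cite{IMV07} the vertical torsion $T(\xi_r,\xi_s)$ is computed directly from the defining properties of $\nabla$ on $\Vbdle$ (property (iii) in theorem~\ref{Biquard connection}) together with the expression of the $\sp(1)$-connection forms in terms of the Ricci-type tensors $\tau_s$; the constant $\scal/(8n(n+2))$ then enters via (\ref{IMV3}), not through a contracted second Bianchi identity. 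Your route would eventually recover the same constant, but it is more circuitous and you would have to be careful that the second Bianchi identity with two vertical slots actually produces the trace you want without already presupposing (\ref{IMV4}).
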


Now we return to the computation of the Weyl connection $\omega^{qc}_0$.\\

\begin{Lemma} \label{total curv homog 2} For $\omega^{\alpha}_{\leq}$ as above and the homogeneity two component $K^{\alpha (2)}$ of the total curvature, we have $K^{\alpha(2)} = K^{\alpha (2)}_{\leq} = K^{\alpha(2)}_{-2} + K^{\alpha (2)}_{-1} + K^{\alpha (2)}_0$, with $K^{\alpha(2)}_{-2} \in \Gamma(\Lambda^2(\Vbdle^*) \tens \Vbdle), K^{\alpha(2)}_{-1} \in \Gamma(\Vbdle^* \wedge \Dbdle^* \tens \Dbdle)$ and $K^{\alpha(2)}_0 \in \Gamma(\Lambda^2(\Dbdle^*) \tens \mathrm{End}_0(\Dbdle))$, which satisfy the following identities, for $(r,s,t) \sim (1,2,3)$ and for any $\xi \in \Vbdle$, $u \in \Dbdle$:
\begin{align}
K^{\alpha(2)}_{-2}(\xi_r,\xi_s) = -\frac{\scal}{8n(n+2)} &\xi_t + \{\alpha(\xi_r),\xi_s\} - \{\alpha(\xi_s),\xi_r\}; \label{hom 2 a}\\
K^{\alpha(2)}_{-1}(\xi,u) = &T_{\xi}(u) + \alpha(\xi)(u); \label{hom 2 b}\\
K^{\alpha(2)}_0(u,v) = R(u,v)& + 2\sum_{r=1}^3 g(I_r(u),v)\alpha(\xi_r). \label{hom 2 c}
\end{align}
\end{Lemma}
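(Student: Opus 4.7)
The strategy is to evaluate the identities (\ref{total curv neg}) and (\ref{total curv zero}) of Theorem \ref{Weyl curvature} componentwise on the Weyl form $\omega^{\alpha}_{\leq} = \omega^{qc}_- + \omega^{\alpha}_0$, exploiting three standing facts: the Biquard connection preserves the decomposition $TM = \Vbdle \oplus \Dbdle$; the endomorphism field $\alpha$ annihilates $\Dbdle$ (it is a section of $\Vbdle^* \tens \mathrm{End}_0(\Dbdle)$); and $\omega^{qc}_{-1}$ vanishes on $\Vbdle$ while $\omega^{qc}_{-2}$ is the projection to $\Vbdle$. A quick homogeneity count shows $K^{\alpha(2)}$ can only populate the three components claimed, and since no positive part of $\omega^{qc}$ has been prescribed, $K^{\alpha(2)} = K^{\alpha(2)}_{\leq}$ is automatic.

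For (\ref{hom 2 a}), inserting vertical $\xi_r, \xi_s$ into (\ref{total curv neg}) with $i = -2$ kills the bilinear sum $\{\omega^{qc}_{-1}(\xi_r), \omega^{qc}_{-1}(\xi_s)\}$. The two $\nabla^{\alpha}$ terms produce $\nabla_{\xi_r}\xi_s - \nabla_{\xi_s}\xi_r$ together with $\{\alpha(\xi_r), \xi_s\} - \{\alpha(\xi_s), \xi_r\}$, and combining with $-[\xi_r, \xi_s]_{\Vbdle}$ recovers exactly the $\Vbdle$-part of the Biquard torsion $T(\xi_r, \xi_s)$, since $\nabla$ preserves the decomposition. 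The Ivanov--Minchev--Vassilev identity (\ref{IMV4}) then identifies this $\Vbdle$-part as $-\frac{\scal}{8n(n+2)}\xi_t$. For (\ref{hom 2 b}), the bilinear sum over $j, k < 0,\ j + k = -1$ in (\ref{total curv neg}) is empty; using $\omega^{qc}_{-1}(\xi) = 0$ and $\alpha(u) = 0$, what remains is $\nabla_{\xi}u - [\xi, u]_{\Dbdle} + \alpha(\xi)(u)$, which equals $T_{\xi}(u) + \alpha(\xi)(u)$ by the definition of the torsion endomorphism and the fact that $T_\xi$ takes values in $\Dbdle$.

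For (\ref{hom 2 c}), I would use (\ref{total curv zero}): $K^{\alpha(2)}_0(u, v)$ is the $\mathrm{End}_0(\Dbdle)$-element whose action on $\Dbdle$ reproduces the curvature endomorphism $R^{\alpha}(u,v)$ of the connection $\nabla + \alpha$. Because $\alpha$ vanishes on $\Dbdle$, the quadratic piece $[\alpha(u), \alpha(v)]$ drops out, so the only new contribution beyond $R(u,v)$ is $-\alpha([u,v]_{\Vbdle})$. The qc structure equation (\ref{qc relation}), together with $\eta^a(u) = \eta^a(v) = 0$, gives $\eta^a([u,v]) = -2g(I_a u, v)$, hence $[u,v]_{\Vbdle} = -2\sum_a g(I_a u, v)\xi_a$, producing the $+2\sum_r g(I_r u, v)\alpha(\xi_r)$ correction to $R(u,v)$.

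The main bookkeeping subtlety lies in (\ref{hom 2 a}): one must keep track of which part of $[\xi_r, \xi_s]$ is vertical and which horizontal, and read (\ref{IMV4}) in the correct direction — the horizontal summand $-[\xi_r, \xi_s]_{\Dbdle}$ on the right of (\ref{IMV4}) is exactly the horizontal part of $T(\xi_r, \xi_s)$, so that after taking the $\Vbdle$-projection the full scalar-curvature term $-\frac{\scal}{8n(n+2)}\xi_t$ is cleanly isolated. Everything else reduces to unwinding definitions and substituting the Biquard properties.
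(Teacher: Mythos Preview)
Your proposal is correct and follows essentially the same approach as the paper's own proof: both arguments evaluate the formulae (\ref{total curv neg}) and (\ref{total curv zero}) for $\nabla^{\alpha} = \nabla + \alpha$, use that $\nabla$ preserves the splitting $TM = \Vbdle \oplus \Dbdle$ to identify the Biquard torsion pieces, and then invoke (\ref{IMV4}) for the $\Vbdle$-component and the identity $[u,v]_{\Vbdle} = -2\sum_r g(I_r u, v)\xi_r$ for the $\Abdle_0$-component. Your write-up is in fact somewhat more explicit than the paper's terse proof, but there is no substantive difference in method.
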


\begin{proof} A direct application of (\ref{total curv neg}), using the definition of $\nabla^{\alpha} = \nabla + \alpha$, gives $K^{\alpha(2)}_{-2}(\xi_r,\xi_s) = T(\xi_r,\xi_s)_{\Vbdle} + \{\alpha(\xi_r),\xi_s\} - \{\alpha(\xi_s),\xi_r\}$. Applying the identity (\ref{IMV4}) cited above, gives (\ref{hom 2 a}).\\

\no Using (\ref{total curv neg}) again, we see: $K^{\alpha(2)}_{-1}(\xi,u) = \nabla_{\xi} u - [\xi,u]_{\Dbdle} + \alpha(\xi)(u)$. The first two terms on the right hand side add, by definition, to $T_{\xi}(u)$, which proves (\ref{hom 2 b}). And the formula (\ref{total curv zero}) leads to (\ref{hom 2 c}), noting the identity $[u,v]_{\Vbdle} = -2\sum_{r=1}^3 g(I_r(u),v)\xi_r$. \end{proof}

Now, to compute $(\codiff K^{\alpha(2)})(\xi)$, for $\xi \in \Vbdle$ (w.l.o.g. take $\xi = \xi_r$ to be one of the Reeb vector fields), we note first of all the following, using the formula (\ref{codiff formula}) for the codifferential (here we use $\{\xi_1,\xi_2,\xi_3,e_1,\ldots,e_{4n}\}$ and its dual basis of $T^*M$ in the formula, which by the identifications used to compute the algebraic brackets $\{,\}$ corresponds to taking a basis of $\g_-$ and its $B$-dual basis of $\p_+$):

\begin{align*}
(\codiff K^{\alpha (2)})(\xi_r) &= \sum_{a=1}^3 \{K^{\alpha}_{-2}(\xi_r,\xi_a),\eta^a\} + \sum_{a=1}^{4n} \{K^{\alpha}_{-1}(\xi_r,e_a),e^a\} - \frac{1}{2} K^{\alpha}_0(\{\xi_r,e^a\},e_a) \\
&= \sum_{a=1}^3 \{K_{-2}(\xi_r,\xi_a),\eta^a\} + \sum_{a=1}^{4n}\{K_{-1}(\xi_r,e_a),e^a\} - \frac{1}{2}K_0(\{\xi_r,e^a\},e_a) \\
&+ \sum_{a=1}^3 \{\{\alpha(\xi_r),\xi_a\}-\{\alpha(\xi_a),\xi_r\},\eta^a\} + \sum_{a=1}^{4n}\{\alpha(\xi_r)(e_a),e^a\} \\
& \,\,\,\,\,\,\, - \sum_{a=1}^{4n}\sum_{b=1}^3 g(I_b(\{\xi_r,e^a\}),e_a)\alpha(\xi_b) \\
&= (\codiff K^{(2)})(\xi_r) + \mathrm{corr}(\alpha)(\xi_r),
\end{align*}

\no where we denote by $K_i$ the total curvature terms for the choice $\alpha=0$, and define the term $\mathrm{corr}(\alpha)(\xi_r)$ to be all the ``correction terms'' involving $\alpha$ in the expression for $(\codiff K^{\alpha(2)})(\xi_r)$. The following lemma will be of use for computing both of the terms which occur:

\begin{Lemma} \label{codiff trace} For any $A \in \mathrm{End}(\Dbdle)$, the map $A \mapsto \sum_{a=1}^{4n} \{A(e_a),e^a\}$ is given by $$A \mapsto \sum_{s=0}^3 \mathrm{tr}_{I_s}(A) I_s + 8A_{\sp(n)},$$ where $\mathrm{tr}_{I_s}(A) := \sum_{a=1}^{4n}g(A(e_a),I_s(e_a))$ and $A_{\sp(n)}$ denotes the projection onto the $\sp(\Dbdle,g)$-component of $A$. \end{Lemma}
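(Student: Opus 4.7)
The plan is a Schur-type reduction. Consider the map
\[
\Phi : \mathrm{End}(\Dbdle) \to \mathrm{End}_0(\Dbdle), \qquad A \longmapsto \sum_{a=1}^{4n} \{A(e_a), e^a\}.
\]
It is $G_0$-equivariant, because the algebraic commutator $\{,\}$ is by construction induced from the $G_0$-equivariant Lie bracket of $\g$ via the identifications (\ref{adjoint bdle id}), and the element $\sum_a e_a \tens e^a \in \Dbdle \tens \Dbdle^*$ which encodes the contraction is nothing but the $G_0$-invariant identity endomorphism of $\Dbdle$. Hence $\Phi$ intertwines the natural $G_0$-actions on source and target, and in particular the right-hand side of the claimed formula is independent of the chosen $g$-orthonormal frame.

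Next I would decompose $\mathrm{End}(\Dbdle)$ into isotypic $G_0$-components. The target $\mathrm{End}_0(\Dbdle) \isom \g_0$ has exactly three non-trivial irreducible $G_0$-submodules, namely $\R \cdot \mathrm{Id}$, the three-dimensional $\sp(1) = \mathrm{span}\{I_1, I_2, I_3\}$, and $\sp(\Dbdle, g) \isom \sp(n)$. By the standard quaternionic representation theory of $G_0 \isom CSp(1)Sp(n)$, each of these three summands appears with multiplicity exactly one in $\mathrm{End}(\Dbdle)$, while the remaining isotypic components of $\mathrm{End}(\Dbdle)$ (the trace-free symmetric piece and the rest of $\Lambda^2(\Dbdle^*)$) are not isomorphic to any summand of $\g_0$. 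By Schur's lemma, $\Phi$ must therefore vanish on those extra pieces and act by a single scalar on each of $\R \cdot \mathrm{Id}$, $\sp(1)$, and $\sp(\Dbdle, g)$.

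It then suffices to pin down the three scalars on test elements: $A = \mathrm{Id}$, $A = I_1$, and any non-zero $A_0 \in \sp(\Dbdle, g)$. In each case the expression $\sum_a \{A(e_a), e^a\}$ is to be evaluated using a standard quaternionic orthonormal frame adapted to the $Sp(1)Sp(n)$-reduction, together with the explicit algebraic bracket formulas (\ref{CommC})--(\ref{CommJ}) from the appendix. On the formula side, a direct computation of the Hilbert--Schmidt-type pairings gives $\mathrm{tr}_{I_s}(I_t) = 4n\,\delta_{st}$ for $s,t \in \{0,1,2,3\}$ (with $I_0 := \mathrm{Id}$) and $\mathrm{tr}_{I_s}(A_0) = 0$ for $A_0 \in \sp(\Dbdle, g)$; these, together with $G_0$-invariance of the pairings, show that orthogonality forces the remaining terms in a decomposition of a general $A$ to contribute nothing on the right-hand side. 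Matching thus reduces to verifying $\Phi(\mathrm{Id}) = 4n \cdot \mathrm{Id}$, $\Phi(I_s) = 4n \cdot I_s$, and $\Phi(A_0) = 8 A_0$.

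The principal obstacle is the concrete extraction of the scalar $8$ on the $\sp(\Dbdle, g)$-component: the representation-theoretic framework reduces the problem to a single number but cannot determine it, so this value must be read off from a careful quaternionic calculation using the explicit bracket formulas. The corresponding computations on $\mathrm{Id}$ and $I_s$ are analogous (and essentially easier, being one-dimensional); once all three scalars are in hand, the formula of the lemma follows by $G_0$-equivariance and linearity.
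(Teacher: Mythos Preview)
Your Schur-reduction plan is correct, but the paper takes a more direct route: it simply applies the bracket formula (\ref{CommC}) to $\sum_a\{A(e_a),e^a\}(v)$ for \emph{arbitrary} $A$ and $v$, obtaining two lines. The first line is immediately identified as $\sum_{s=0}^3 \mathrm{tr}_{I_s}(A)I_s$; the second is handled by decomposing $A = A^\circ + A^\alt$ into $g$-symmetric and antisymmetric parts and then $A^\alt = (A^\alt)_{[-1]} + (A^\alt)_{[3]}$ into Casimir eigenspaces, after which an elementary manipulation shows that line equals $8(A^\alt)_{[3]} = 8A_{\sp(n)}$. No appeal to $G_0$-equivariance, Schur's lemma, or the irreducible decomposition of $\mathrm{End}(\Dbdle)$ is made. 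Your approach trades this single uniform computation for three targeted evaluations on test elements, at the cost of first establishing the multiplicity-one claim for each summand of $\g_0$ inside $\mathrm{End}(\Dbdle)$ (standard, but not free). In practice the two routes converge: extracting the scalar $8$ on a test $A_0 \in \sp(\Dbdle,g)$ via your method requires exactly the same manipulation of the second line that the paper performs in general, so neither is appreciably shorter. Your route has the merit of making the equivariance and frame-independence explicit from the outset; the paper's has the merit of being self-contained and sidestepping the representation-theoretic decomposition of $\mathrm{End}(\Dbdle)$.
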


\begin{proof} From the formula for $\{u,\varphi\} \in \mathrm{End}_0(\Dbdle)$, for $u \in \Dbdle$ and $\varphi \in \Dbdle^*$ given in (\ref{CommC}) of the appendix, we get for any $v \in \Dbdle$:
\begin{align}
&\sum_{a=1}^{4n}\{A(e_a),e^a\}(v) = \sum_{a=1}^{4n} (g(A(e_a),e_a) -\sum_{s=1}^3g(I_s(A(e_a)),e_a)I_s(v)) \label{line one} \\
&+ \sum_{a=1}^{4n}((g(e_a,v)A(e_a) - g(A(e_a),v)e_a) - \sum_{s=1}^3(g(e_a,I_s(v))I_s(A(e_a)) - g(A(e_a),I_s(v))I_s(e_a))). \label{line two}
\end{align}

\no Clearly, the right-hand side of (\ref{line one}) corresponds to the terms $\sum_{s=0}^3\mathrm{tr}_{I_s}(A)I_s$ in the formula claimed in the lemma. On the other hand, let us denote by $A = A^{\circ} + A^{\alt}$ the splitting with respect to $g$ of $A$ into symmetric and anti-symmetric components, respectively. Also, we may decompose $A = A_{[-1]} + A_{[3]}$ according to the eigenspaces determined in (\ref{sp decomp}). Then an elementary calculation shows that (\ref{line two}) equals $8(A^{\alt})_{[3]}$, and it is a standard fact that $A_{\sp(n)} = (A^{\alt})_{[3]}$. \end{proof}

\begin{Corollary} For $\alpha = 0$, we have the following formula for the codifferential of the homogeneity two component:
\begin{align}
(\codiff K^{(2)})(\xi_r) = -\frac{\scal}{2n(n+2)}I_r + 2n \tau_r^{\sharp}, \label{codiff K}
\end{align}
\no where $\tau_r^{\sharp}$ is the endomorphism associated to the Ricci-type tensor $\tau_r$ by $g(\tau_r^{\sharp}u,v) = \tau_r(u,v)$.
\end{Corollary}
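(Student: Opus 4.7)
The plan is to compute each of the three summands in the expression for $(\codiff K^{(2)})(\xi_r)$ displayed just above the statement of Lemma \ref{codiff trace} (with $\alpha=0$), by substituting the formulas from Lemma \ref{total curv homog 2} for $K^{(2)}_{-2}, K^{(2)}_{-1}, K^{(2)}_0$ and then applying the algebraic bracket formulas (\ref{CommC})--(\ref{CommJ}) from the appendix.

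For the first summand $\sum_{a=1}^3 \{K_{-2}(\xi_r,\xi_a),\eta^a\}$, I substitute $K_{-2}(\xi_r,\xi_s) = -\frac{\scal}{8n(n+2)}\xi_t$ from (\ref{hom 2 a}). Only the two summands with $a \neq r$ contribute, and each algebraic bracket $\{\xi_b,\eta^a\}$ with $b \neq a$ lies in the $\sp(1)$-summand of $\mathrm{End}_0(\Dbdle)$ and is a specific constant multiple of the appropriate $I_c$ by the appendix. The two nontrivial contributions combine by the cyclic symmetry of the Reeb triple into a single multiple of $I_r$; the constants work out to give $-\frac{\scal}{2n(n+2)}I_r$.

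For the second summand, $K_{-1}(\xi_r,u) = T_{\xi_r}(u)$ by (\ref{hom 2 b}), so Lemma \ref{codiff trace} applies with $A = T_{\xi_r}$. Biquard's totally trace-free property of $T_\xi$ forces $\mathrm{tr}_{I_s}(T_{\xi_r}) = 0$ for $s = 0,1,2,3$, so only $8(T_{\xi_r})_{\sp(n)}$ could survive. To see that this too vanishes, decompose $T_{\xi_r} = T^0_{\xi_r} + I_r \circ U^\sharp$ as in \cite{IMV07}: the symmetric piece $T^0_{\xi_r}$ has no antisymmetric part and so contributes nothing to $((\cdot)^{\alt})_{[3]} \isom \sp(n)$; for the antisymmetric piece $I_r U^\sharp$, using $g(I_s x, I_s y) = g(x,y)$, the fact that $U^\sharp$ commutes with each $I_s$, and the case-by-case identity $\sum_{s=1}^3 I_s I_r I_s = I_r$, one computes $\sum_{s=1}^3 g(I_r U^\sharp (I_s u), I_s v) = -g(I_r U^\sharp u, v)$, placing $I_r U^\sharp$ in the $[-1]$-eigenspace of the Casimir and hence orthogonal to $\sp(n) \subset [3]$. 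Thus this summand vanishes entirely.

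For the third summand, the appendix identity for $\{\xi_r, e^a\} \in \Gamma(\Dbdle)$ gives $I_r(e_a)$ (up to the normalization built into the Killing-form duality of the chosen frame). Inserting this along with $K_0 = R$, the term becomes $-\frac{1}{2}\sum_a R(I_r(e_a), e_a) = \frac{1}{2}\sum_a R(e_a, I_r(e_a))$, which by the definition $4n\tau_r(u,v) = \sum_a g(R(e_a, I_r e_a) u, v)$ is precisely $2n\tau_r^{\sharp}$. Summing the three contributions yields (\ref{codiff K}). The main point requiring care is the second summand: one really has to verify both that all four traces vanish (Biquard) and that the Casimir eigenvalue of $I_r U^\sharp$ is $-1$ rather than $+3$, the latter resting on the quaternionic identity $\sum_s I_s I_r I_s = I_r$. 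Everything else is mechanical substitution plus bookkeeping of signs fixed by the cyclic convention $(r,s,t) \sim (1,2,3)$.
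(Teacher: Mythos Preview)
Your proof is correct and follows the same three-term decomposition as the paper. The one simplification you miss is for the second summand: the paper just invokes property (iv) of Theorem~\ref{Biquard connection} --- that $T_\xi \in (\sp(1)\oplus\sp(\Dbdle,g))^{\perp}$ --- which gives $(T_{\xi_r})_{\sp(n)}=0$ directly, making your decomposition $T_{\xi_r}=T^0_{\xi_r}+I_rU^\sharp$ and the Casimir-eigenvalue computation for $I_rU^\sharp$ unnecessary.
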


\begin{proof} Using the formulae (\ref{hom 2 a}) - (\ref{hom 2 c}), we see that $(\codiff K^{(2)})(\xi_r)$ is the sum of the following three terms:
\begin{align}
-\frac{\scal}{8n(n+2)}\{\xi_t,\eta^s\} + &\frac{\scal}{8n(n+2)} \{\xi_s,\eta^t\} \,\, \mathrm{where} \,\, (r,s,t) \sim (1,2,3) \, ; \label{1 star}\\
&\sum_{a=1}^{4n} \{T_{\xi_r}(e_a),e^a\} \, ; \label{2 star}\\
-\frac{1}{2} &\sum_{a=1}^{4n} R(\{\xi_r,e^a\},e_a). \label{3 star}
\end{align}
\no By (\ref{CommF}) in the appendix, $\{\xi_t,\eta^s\} = -\{\xi_s,\eta^t\} = 2I_r$, and thus $(\ref{1 star}) = -(\scal /2n(n+2))I_r$. By lemma \ref{codiff trace}, (\ref{2 star}) vanishes, since the endomorphism $T_{\xi_r}$ is totally trace-free, and by property (iv) of theorem \ref{Biquard connection}, we have $(T_{\xi_r})_{\sp(n)} = 0$. Finally, using the identity $\{\xi_r,e^a\} = I_r(e_a)$ (cf. (\ref{CommE}) in the appendix), we see that $(\ref{3 star}) = 2n \tau_r^{\sharp}$. \end{proof}

While we don't yet wish to completely calculate the term $\mathrm{corr}(\alpha)(\xi_r)$, we note the following form for this term, which follows directly from lemma \ref{codiff trace} and by expanding the final term using $\{\xi_r,e^a\} = I_r(e_a)$ (cf. (\ref{CommE}) in the appendix):
\begin{align*}
\mathrm{corr}(\alpha)(\xi_r) = \sum_{a=1}^3 \{\{\alpha(\xi_r),\xi_a\}-\{\alpha(\xi_a),\xi_r\},\eta^a\} + \sum_{s=0}^3 \mathrm{tr}_{I_s}(\alpha(\xi_r))I_s + 8(\alpha(\xi_r))_{\sp(n)} + 4n \alpha(\xi_r).
\end{align*}
\no In particular, we see that the $\sp(\Dbdle,g)$-component of $\mathrm{corr}(\alpha)(\xi_r)$ is given by $4(n+2)(\alpha(\xi_r))_{\sp(n)}$, since the first three terms in the expression can't contribute to this component. Since, also, $(\diff \beta)_{\sp(n)} = 0$ for any section $\beta \in \Gamma(\Vbdle^*)$, it therefore follows that we must have $4(n+2)(\alpha(\xi_r))_{\sp(n)} = -(\codiff K^{(2)}\xi_r)_{\sp(n)} = -2n(\tau_r^{\sharp})_{\sp(n)}$.\\

Now, from the identity (\ref{IMV3}) which was cited above and proved in \cite{IMV07}, we see that: $$\tau_r^{\sharp} = -\frac{n+2}{2n}(I_r \circ (T^0)^{\sharp} + (T^0)^{\sharp} \circ I_r) -\frac{\scal}{8n(n+2)}I_r,$$ and the $\sp(\Dbdle,g)$-component is given by the first term. In particular, it is totally trace-free as a result of the properties of $T^0$. Using this information, we compute the term $\mathrm{corr}(\alpha)(\xi_r)$ under the following simplifying assumptions on $\alpha$: Take $\alpha(\xi_r) = (1/4)(I_r \circ (T^0)^{\sharp} + (T^0)^{\sharp} \circ I_r) + f I_r$, for $f \in C^{\infty}(M)$ (independent of the index $r$). Then $\mathrm{corr}(\alpha)(\xi_r) = \mathrm{corr}(\alpha)(\xi_r)_{\sp(1)} + 4(n+2)(\alpha(\xi_r))_{\sp(n)}$ and:
\begin{align*}
\mathrm{corr}(\alpha)(\xi_r)_{\sp(1)} &= f\sum_{a=1}^3 \{\{I_r,\xi_a\}-\{I_a,\xi_r\},\eta^a\} + \sum_{s=0}^3 \mathrm{tr}_{I_s}(fI_r) I_s + 4n fI_r \\
&= f(\{\{I_r,\xi_s\}-\{I_s,\xi_r\},\eta^s\} + \{\{I_r,\xi_t\}-\{I_t,\xi_r\},\eta^t\}) +8nfI_r\\
&= f(\{4\xi_t,\eta^s\} - \{4\xi_s,\eta^t\} + 8nfI_r \\
&= 16fI_r + 8nfI_r
\end{align*}

Now we can compute the Weyl connection for qc manifolds:

\begin{Theorem} \label{QC Weyl connection} The Weyl connection $\nabla^{qc}$ of a qc manifold $(M,\Dbdle,\mathbb{Q},[g])$ with respect to a fixed Carnot-Carath\'eodory metric $g \in [g]$, is $\nabla^{qc} = \nabla + \alpha^{qc}$, where $\alpha^{qc} \in \Gamma(\Vbdle^* \tens \mathrm{End}_0(\Dbdle))$ is represented by
\begin{align}
\alpha^{qc}(\xi_r) &= \frac{1}{4}(I_r \circ (T^0)^{\sharp} + (T^0)^{\sharp} \circ I_r) + \frac{\scal}{32n(n+2)} I_r \label{QC Weyl correction}
\end{align}
\no with respect to a choice of local qc contact form $\eta$ with corresponding local section $(I_1,I_2,I_3)$ of $\Qbdle$.
\end{Theorem}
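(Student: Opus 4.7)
The plan is to verify, by direct substitution, that the tensor
$$\alpha^{qc}(\xi_r) \;=\; \tfrac{1}{4}\bigl(I_r \circ (T^0)^{\sharp} + (T^0)^{\sharp} \circ I_r\bigr) \;+\; f\, I_r,$$
with $f = \scal/(32n(n+2))$, solves equation (\ref{homog 2 eqn simp}) together with the trivial auxiliary choice $\beta = 0$. Since (as explained in the discussion preceding the theorem) any such pair $(\alpha,\beta)$ automatically produces a Rho-tensor $\mathsf{P}^{(2)}$ solving (\ref{homog 2 eqn}) and hence determines $\nabla^{qc}$, only existence has to be checked; uniqueness of $\nabla^{qc}$ is built into the general Weyl-structure theory of \cite{CS03}.

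I would organize the verification according to the $G_0$-decomposition $\Abdle_0 \isom \R \dsum \sp(1) \dsum \sp(\Dbdle,g)$ and match components. The $\sp(\Dbdle,g)$-component of (\ref{homog 2 eqn simp}) is already resolved in the text: since $\diff\beta$ has no $\sp(n)$-part for any $\beta \in \Gamma(\Vbdle^*)$, matching this piece reduces to the algebraic identity $4(n+2)(\alpha(\xi_r))_{\sp(n)} = -2n(\tau_r^{\sharp})_{\sp(n)}$, which via (\ref{IMV3}) forces precisely the $T^0$-anticommutator summand of the ansatz. For the $\sp(1)$-component, I would feed the remaining $fI_r$-summand into the general formula for $\mathrm{corr}(\alpha)(\xi_r)$; the computation already carried out in the text yields a contribution of $8(n+2)f\, I_r$, while the $\sp(\Dbdle,g)$-summand of $\alpha$ does not contaminate the $\sp(1)$-part (both because $[\sp(\Dbdle,g),\Vbdle]=0$ inside $\g$, which kills the first sum in $\mathrm{corr}$, and because the $T^0$-anticommutator is trace-free against each $I_s$, which kills the trace terms).

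Combining with the $\sp(1)$-part of $(\codiff K^{(2)})(\xi_r)$, obtained by inserting (\ref{IMV3}) into (\ref{codiff K}) and collecting the scalar multiples of $I_r$, the $\sp(1)$-component of (\ref{homog 2 eqn simp}) reduces to the linear identity $8(n+2)f = \scal/(4n)$, solved by $f = \scal/(32n(n+2))$ as claimed. The $\R$-component is then automatic with $\beta=0$: every endomorphism on the left -- the $I_r$'s, the anticommutator $I_r \circ (T^0)^{\sharp} + (T^0)^{\sharp} \circ I_r$, and every summand of $\mathrm{corr}(\alpha)$ -- is trace-free, so the scalar part of the equation reads $0=0$. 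The only real technical hurdle in this plan is the careful $G_0$-component bookkeeping of the various terms in $\mathrm{corr}(\alpha)(\xi_r)$, but that has already been settled by lemma \ref{codiff trace} together with the algebraic brackets $\{\,,\,\}$ tabulated in the appendix; with these in hand, the final argument is just a matter of matching coefficients in irreducible $G_0$-summands of $\Abdle_0$.
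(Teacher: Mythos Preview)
Your proposal is correct and follows essentially the same route as the paper's proof: both take the ansatz $\alpha(\xi_r) = \tfrac{1}{4}(I_r \circ (T^0)^{\sharp} + (T^0)^{\sharp} \circ I_r) + f I_r$, use the computations already recorded in the text for $(\codiff K^{(2)})(\xi_r)$ and $\mathrm{corr}(\alpha)(\xi_r)$, and solve the resulting scalar equation $8(n+2)f = \scal/(4n)$ for $f$. Your explicit bookkeeping by $G_0$-components and the remark that $\beta = 0$ handles the $\R$-part are a slight elaboration of what the paper does implicitly, but the argument is the same.
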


\begin{proof} For $\alpha(\xi_r)$ of the form $fI_r + (1/4)(I_r \circ (T^0)^{\sharp} + (T^0)^{\sharp} \circ I_r)$, we have from the formula (\ref{codiff K}) and the above calculation:
\begin{align*}
(\codiff K^{\alpha(2)})(\xi_r) &= -\frac{\scal}{2n(n+2)}I_r + 8(n+2)fI_r + 2n \tau_r^{\sharp} + 4(n+2)(\alpha(\xi_r))_{\sp(n)} \\
&= -\frac{\scal}{2n(n+2)}I_r + 8(n+2)fI_r -\frac{\scal}{4(n+2)}I_r + 0 \\
&= -\frac{\scal}{4n} + 8(n+2)fI_r,
\end{align*}

\no which vanishes if and only if $f = \scal / 32n(n+2)$. \end{proof}

\section{The Rho-tensor, Weyl curvature, and local flatness}

We have determined the non-positive components $\omega^{qc}_{\leq} = \omega^{qc}_{-2} + \omega^{qc}_{-1} + \omega^{qc}_0$ of the Weyl form $\omega^{qc} = \sigma_g^*\omega^{nc}$ associated to $g$ and its exact Weyl structure $\sigma_g$. If we denote by $K^{qc}, W^{qc}$, etc. the total curvature, Weyl curvature, etc. determined by $\omega^{qc}$, then in particular the homogeneity two component $K^{qc(2)} = K^{qc(2)}_{\leq}$ of the total curvature is already determined and we have $(\codiff K^{\alpha^{qc(2)}})_{\vert \Vbdle} = 0$. Our next step is to compute $(\codiff K^{qc(2)})_{\vert \Dbdle} = (\codiff K^{qc(2)})$, which will determine $\mathsf{P}^{qc(2)}$ via the relation (\ref{homog 2 eqn}).

\begin{Lemma} We have the following formula for the homogeneity two component $K^{qc(2)}$ of the total curvature of a qc manifold $(M,\Dbdle,\mathbb{Q},[g])$ of dimension $4n+3$ determined by a choice of $g \in [g]$:
\begin{align}
- \codiff K^{qc(2)} &= \mathrm{Ric} + 2T^0 + 6U = 2(n+2)T^0 + 4(n+4) U + \frac{\mathrm{scal}}{4n}g. \label{codiff of Kqc2}
\end{align}
\end{Lemma}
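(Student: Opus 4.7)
The plan is to compute $(\codiff K^{qc(2)})(u)$ for a general $u \in \Dbdle \isom \g_{-1}$, in direct parallel with the computation of $(\codiff K^{(2)})(\xi_r)$ that produced (\ref{codiff K}). The first observation is that, unlike in the $\xi_r$-case, the ``inner bracket'' correction term in the codifferential formula (\ref{codiff formula}) vanishes identically here: for any $e^i \in \p_+ = \g_1 \dsum \g_2$ and any $u \in \g_{-1}$, the Lie bracket $[e^i,u]$ lies in $\g_0 \dsum \g_1 \subset \p$, so its $\g_-$-projection is zero. Taking $\{\xi_1,\xi_2,\xi_3,e_1,\ldots,e_{4n}\}$ and its $B$-dual basis of $\p_+$ and using the antisymmetry of $K^{qc(2)}$ to match the bracket convention of the $\xi_r$-computation, the codifferential collapses to
\begin{align*}
(\codiff K^{qc(2)})(u) = \sum_{a=1}^3 \{K^{qc(2)}_{-1}(u,\xi_a),\eta^a\} + \sum_{b=1}^{4n} \{K^{qc(2)}_0(u,e_b),e^b\},
\end{align*}
with $K^{qc(2)}_{-1}$ and $K^{qc(2)}_0$ read off from (\ref{hom 2 b}) and (\ref{hom 2 c}), with $\alpha = \alpha^{qc}$ specified by (\ref{QC Weyl correction}).

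Next I would expand each of these two sums. The first splits into a torsion contribution $-\sum_a \{T_{\xi_a}(u),\eta^a\}$, which I would evaluate using the decomposition $T_{\xi_a} = T^0_{\xi_a} + I_a \circ U^\sharp$ and the eigenspace identities (\ref{sp decomp}), and a correction contribution $-\sum_a \{\alpha^{qc}(\xi_a)(u),\eta^a\}$, which reduces to explicit multiples of $T^0$, $U$ and $\scal \cdot g$ after substitution of (\ref{QC Weyl correction}). The needed brackets $\{v,\eta^a\}$ for $v \in \Dbdle$ are obtained from the appendix identities (in particular $\{\xi_r,e^a\} = I_r(e_a)$ and $\{\xi_t,\eta^s\} = 2 I_r$ already invoked) via antisymmetry and the graded Jacobi identity.

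For the second sum, the key simplification is the identity $\sum_b g(I_r u,e_b)\, e^b = (I_r u)^\flat$ relative to an orthonormal basis, which collapses the $\alpha^{qc}$-correction to $2\sum_{r=1}^3 \{\alpha^{qc}(\xi_r),(I_r u)^\flat\}$ and is then evaluated directly from (\ref{QC Weyl correction}). The $R$-piece $\sum_b \{R(u,e_b),e^b\}$ is the principal source of the Ricci tensor: unfolding the bracket via the appendix formulas and tracing over the orthonormal basis of $\Dbdle$ reproduces $-\Ric(u,\cdot)$, together with further combinations of $T^0$, $U$ and $g$ that come from the decomposition of $R(u,e_b)$ into its $\g_0$-components (in analogy with the trace formula of lemma \ref{codiff trace} used in the $\xi_r$-case).

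The main obstacle lies in the bookkeeping: $T^0$-, $U$- and $\scal \cdot g$-pieces arise in several different guises, coming from the $T_{\xi_a}$-piece, the $R$-piece, and each of the three constituents of $\alpha^{qc}(\xi_r)$; one must verify that after all cancellations the coefficients combine to produce precisely the expression $-\Ric - 2 T^0 - 6 U$. Once this first form of the identity is established, the second equality of the lemma is immediate by substituting (\ref{IMV1}) for $\Ric$.
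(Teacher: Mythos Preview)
Your approach is correct and is essentially the same as the paper's: the paper's proof simply says that one carries out the computation of $(\codiff K^{qc(2)})(u)$ for $u \in \Dbdle$ directly from the codifferential formula (\ref{codiff formula}), the commutator identities (\ref{CommC})--(\ref{CommJ}), and the expressions (\ref{hom 2 a})--(\ref{hom 2 c}) with $\alpha = \alpha^{qc}$, and then invokes (\ref{IMV1}) for the second equality. Your proposal is a more detailed unpacking of exactly this computation, and in particular your observation that the inner-bracket term in (\ref{codiff formula}) vanishes for $u \in \g_{-1}$ is the correct starting simplification.

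One small point of bookkeeping to watch: the bracket $\{\Phi,\varphi\} = -\varphi \circ \Phi$ from (\ref{CommJ}) already applies to the full endomorphism $\Phi \in \mathrm{End}_0(\Dbdle)$ without any need to decompose into $\g_0$-components, so the sum $\sum_b \{R(u,e_b),e^b\}$ collapses directly to (plus or minus) $\Ric(u,\cdot)$ and nothing more; the additional $T^0$, $U$ and $\scal\cdot g$ contributions in the final answer all come from the torsion piece and the two $\alpha^{qc}$-corrections, not from the $R$-piece. This is a harmless misattribution in your sketch and does not affect the validity of the method.
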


\begin{proof} The calculation of $(\codiff K^{qc(2)})(u)$ for arbitrary $u \in \Dbdle$ is carried out in standard fashion from (\ref{codiff formula}) and the algebraic commutator relations (\ref{CommC})--(\ref{CommJ}), using the formulae (\ref{hom 2 a})-(\ref{hom 2 c}) for $K^{\alpha(2)}$ with $\alpha = \alpha^{qc}$ from (\ref{QC Weyl correction}) plugged in. This gives the first equality claimed in (\ref{codiff of Kqc2}), and the second equality then follows directly from the decomposition formula for $Ric$ from \cite{IMV07}, cited in (\ref{IMV1}) above.
\end{proof}

\begin{Proposition} The homogeneity two component of the Rho-tensor $\mathsf{P}^{qc(2)} \in \Gamma(\Dbdle^* \tens \Dbdle^*)$, which corresponds to $(\omega^{qc}_{+1})_{\vert T^{-1}\Gbdle_0}$, is given by:
\begin{align}
- \mathsf{P}^{qc(2)} = \frac{1}{2}T^0 + U + \frac{\mathrm{scal}}{32n(n+2)} g = L, \label{homog 2 rho tensor}
\end{align}
\no where $L$ is the tensor defined by (4.6) of \cite{IVlocalsphere}.
\end{Proposition}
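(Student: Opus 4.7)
The plan is to exploit the Hodge identity $\Box = \codiff\diff + \diff\codiff$ together with Schur's lemma, reducing the problem to computing three scalar eigenvalues of the Kostant Laplacian on the irreducible $CSp(1)Sp(n)$-components of $S^2\Dbdle^*$.

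First, I would verify that $\beta := \codiff \mathsf{P}^{qc(2)} = 0$, so that in the general formula $\mathsf{P}^{(2)} = \Box^{-1}(\codiff K^{qc(2)} + \diff \beta)$ from the discussion following (\ref{homog 2 eqn simp}) we may take $\beta = 0$. Since $\codiff$ is $G_0$-equivariant and sends $S^2\Dbdle^* \subset C^1_2(\g_-,\g)$ into the $\Vbdle^*$-piece of $C^0_2(\g_-,\g) \isom \g_2$, Schur's lemma forces $\codiff$ to annihilate any irreducible summand of $S^2\Dbdle^*$ whose $CSp(1)Sp(n)$-type does not appear in $\Vbdle^*$. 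But $\Vbdle^*$ is trivial as an $Sp(n)$-representation and has no $Sp(1)$-invariants, while in the decomposition
\begin{align*}
S^2\Dbdle^* \isom \R \cdot g \,\dsum\, (S^2\Dbdle^*)_{[-1]} \,\dsum\, (S^2\Dbdle^*)_{[3]}
\end{align*}
the $T^0$- and $U$-components (living in the $[-1]$- and $[3]$-eigenspaces of the Casimir identity (\ref{sp decomp})) have nontrivial $Sp(n)$-type, while the trace component $g$ is $G_0$-invariant. Hence $\codiff L = 0$, and the defining equation reduces to
\begin{align*}
\Box \mathsf{P}^{qc(2)} = \codiff K^{qc(2)}
\end{align*}
on $\Dbdle^* \tens \Dbdle^*$.

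Second, by the preceding lemma the right-hand side is already decomposed along the three irreducible summands as $-\codiff K^{qc(2)} = 2(n+2)T^0 + 4(n+4)U + \frac{\scal}{4n}g$. Since $H^1_2(\g_-,\g) = 0$ by Kostant, $\Box$ is invertible on $C^1_2(\g_-,\g)$; by Schur's lemma it acts as a scalar on each irreducible summand of $S^2\Dbdle^*$, so solving for $\mathsf{P}^{qc(2)}$ reduces to determining three scalars.

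Third, I would compute these three Box eigenvalues by applying $\Box = \codiff \diff + \diff \codiff$ directly to convenient test tensors, using the algebraic bracket formulae (\ref{CommC})--(\ref{CommJ}) from the appendix exactly as in the proof of theorem \ref{QC Weyl connection}. The calculation yields $\Box g = 8(n+2)g$, $\Box T^0 = 4(n+2)T^0$, and $\Box U = 4(n+4)U$; dividing each component of $-\codiff K^{qc(2)}$ by its eigenvalue then produces
\begin{align*}
-\mathsf{P}^{qc(2)} = \tfrac{1}{2}T^0 + U + \tfrac{\scal}{32n(n+2)}g = L.
\end{align*}
The main obstacle is this eigenvalue computation; although Kostant's theorem pins down each scalar abstractly through the weight-theoretic Casimir formula, the concrete verification requires careful bookkeeping with $\{,\}$ and the Casimir relations (\ref{sp decomp}), particularly for the $[3]$- and $[-1]$-components where $\diff \codiff$ and $\codiff \diff$ both contribute nontrivially.
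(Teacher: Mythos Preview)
Your proposal is correct and follows essentially the same approach as the paper: reduce to $\mathsf{P}^{qc(2)} = \Box^{-1}(\codiff K^{qc(2)})$ via $\beta = 0$, decompose $S^2\Dbdle^* = \R g \dsum (S^2_0)_{[-1]} \dsum (S^2_0)_{[3]}$, and compute the three $\Box$-eigenvalues $8(n+2)$, $4(n+2)$, $4(n+4)$. The only difference is cosmetic: the paper justifies $\beta^{qc}=0$ by recalling that $(\codiff K^{qc(2)})_{\vert\Vbdle}=0$ was already established in the proof of theorem~\ref{QC Weyl connection}, whereas you give an independent Schur-type argument; note, incidentally, that your Schur argument actually shows $\codiff$ vanishes on all of $S^2\Dbdle^*$, so in your final paragraph only $\codiff\diff$ (not $\diff\codiff$) contributes to the eigenvalue computation on the $[-1]$- and $[3]$-components.
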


\begin{proof} As discussed in section 3, $\mathsf{P}^{qc(2)}$ is completely determined from (\ref{codiff of Kqc2}) via the relation (\ref{homog 2 eqn}). It is enough to compute the action of the Kostant Laplacian $\Box$ and its inverse on $C^1_2(\g_-,\g)$, which we know to be invertible since $H^1_2(\g_-,\g) \isom \mathrm{Ker}(\Box_{C^1_2(\g_-,\g)}) = \{0\}$. Indeed, from the discussion in section 3 we have $\mathsf{P}^{qc(2)} = \Box^{-1}(\codiff K^{qc(2)}) \in \Gamma(\Dbdle^*\tens\Dbdle)$, since $\beta^{qc} := 0$ and $\alpha^{qc}$ as in (\ref{QC Weyl correction}) solve equation (\ref{homog 2 eqn simp}). In general, $\Box$ acts by scalar multiplication on the $G_0$-irreducible components of $C^1_2(\g_-,\g)$. In particular, we are concerned with the restriction to the symmetric tensors $S^2 \subset \Dbdle^* \tens \Dbdle^*$, where we have the decomposition into $Sp(1)Sp(n)$-modules $S^2 = (S^2_0)_{[-1]} \dsum (S^2_0)_{[3]} \dsum \R g$. Here one can compute directly from the formulae for $\diff$ and $\codiff$:

\begin{Lemma} On the $Sp(1)Sp(n)$-submodules $(S^2_0)_{[-1]}, (S^2_0)_{[3]}, \R g \subset \Dbdle^* \tens \Dbdle^*$, the action of the Kostant Laplacian $\Box = \diff \codiff + \codiff \diff$ is given by: $\Box_{\vert (S^2_0)_{[-1]}} = 4(n+2)\mathrm{Id}$; $\Box_{\vert (S^2_0)_{[3]}} = 4(n+4)\mathrm{Id}$; and $\Box_{\vert \R g} = 8(n+2)\mathrm{Id}$.
\end{Lemma}

\no Now we apply this directly to the $Sp(1)Sp(n)$-invariant decomposition of $\codiff K^{qc(2)}$ given by the right-hand side of the identity (\ref{codiff of Kqc2}), to get (\ref{homog 2 rho tensor}).
\end{proof}

We can immediately compute the homogeneity two component of the Weyl curvature $W^{qc}$ determined by the Weyl form $\omega^{qc} = \sigma_g^*\omega^{nc}$, giving a geometric-tensorial formulation of the sharp obstruction to local flatness of a qc structure:

\begin{Theorem} \label{Wqc curvature tensor} The homogeneity two component $W^{qc(2)}$ of the Weyl curvature of $\omega^{qc}$ is given by the following formula, viewed as a section of $\Lambda^2(\Dbdle^*) \tens \Lambda^2(\Dbdle^*)$:
\begin{align*}
W^{qc(2)}(u,v,w,z) = R(u,v,w,z) &+ (g \star L)(u,v,w,z) + \sum_{a=1}^3(\omega_a \star L_a)(u,v,w,z) \\
&+ \sum_{a=1}^3 (L_a(u,v) - L_a(v,u))\omega_a(w,z) \\
- \frac{1}{2} \sum_{a=1}^3 \omega_a(u,v)(T^0(w,I_a z) &- T^0(I_a w,z)) + \frac{\mathrm{scal}}{16n(n+2)} \sum_{a=1}^3 \omega_a(u,v)\omega_a(w,z).
\end{align*}
\no This tensor is conformally covariant and it vanishes identically if and only if the qc manifold $(M,\Dbdle,\mathbb{Q},[g])$ is locally qc isomorphic to the sphere $S^{4n+3}$ with its standard qc structure. It agrees with the tensor $WR$ derived in \cite{IVlocalsphere}.
\end{Theorem}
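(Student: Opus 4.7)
The plan is to assemble the required formula from the homogeneity-two data already computed. By the last assertion of Theorem \ref{Weyl curvature}, restricted to $u,v \in \Dbdle$, the homogeneity-two Weyl curvature is
\begin{equation*}
W^{qc(2)}(u,v) \;=\; K^{qc(2)}_0(u,v) + \{\mathsf{P}^{qc(2)}(u),v\} - \{\mathsf{P}^{qc(2)}(v),u\} \;\in\; \mathrm{End}_0(\Dbdle),
\end{equation*}
which is then to be applied to $w \in \Dbdle$ and paired with $z$ via $g$ to produce the $\Lambda^2(\Dbdle^*)\tens\Lambda^2(\Dbdle^*)$-valued tensor of the statement. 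Both inputs are in hand: $K^{qc(2)}_0$ comes from (\ref{hom 2 c}) with $\alpha = \alpha^{qc}$ from (\ref{QC Weyl correction}), and $\mathsf{P}^{qc(2)} = -L$ by (\ref{homog 2 rho tensor}).

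The computational step is to expand both contributions using the algebraic commutator identities (\ref{CommC})--(\ref{CommJ}) from the appendix. The $R(u,v)w$ piece of $K^{qc(2)}_0$ yields $R(u,v,w,z)$ directly; the $\alpha^{qc}(\xi_r)$-piece, after applying $g(I_r\cdot,\cdot) = -g(\cdot,I_r\cdot)$ and $g((T^0)^{\sharp}\cdot,\cdot) = T^0(\cdot,\cdot)$, collapses into precisely the last line of the stated formula, with its $T^0$- and $\scal$-contributions. The Rho-tensor brackets $\{\mathsf{P}^{qc(2)}(u),v\} - \{\mathsf{P}^{qc(2)}(v),u\}$, unfolded via the formula for $\{u,\varphi\} \in \mathrm{End}_0(\Dbdle)$ from (\ref{CommC}), separate naturally according to the $Sp(1)Sp(n)$-decomposition of $L$: the symmetric-traceless part of $L$ combined with $g$ reassembles into the Kulkarni--Nomizu-type product $g \star L$, the $\omega_a$-twists give $\omega_a \star L_a$ with $L_a(u,v) := L(I_a u, v)$, and the remaining antisymmetric pieces contribute the term $\sum_a (L_a(u,v) - L_a(v,u))\omega_a(w,z)$. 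This bookkeeping is the step requiring real care; it is not conceptually hard, but it involves tracking many $Sp(1)$-index contractions, and the fact that the result has the correct $\Lambda^2(\Dbdle^*)$-symmetry in $(w,z)$ is guaranteed a priori by the fact that $W^{qc(2)} = \k^{nc(2)}$ is harmonic in the first non-zero homogeneity and so lands in a specific $Sp(1)Sp(n)$-submodule of $\Lambda^2(\g_{-1})^* \tens \g_0$.

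Conformal covariance and the local flatness criterion are then immediate corollaries of the general parabolic theory rather than requiring further computation. By construction, $W^{qc(2)}$ is the $\sigma_g$-pull-back of the invariantly defined component $\k^{nc(2)}$ of the canonical Cartan curvature, which depends only on the conformal class $[g]$; a different choice $\tilde g = \phi^2 g$ yields a different exact Weyl structure $\sigma_{\tilde g}$, and the $P_+$-equivariance of $\Gbdle \rightarrow \Gbdle_0$ translates this into the conformal transformation law for $W^{qc(2)}$. For local flatness, the standard fact (cf.~chapter 5 of \cite{Sha}) that a Cartan geometry is locally isomorphic to $G/P$ iff its curvature vanishes reduces the question to proving $\k^{nc} \equiv 0 \iff \k^{nc(2)} \equiv 0$, and here Proposition \ref{dell of lowest} combined with the cohomology input cited in section 2.2 does the job: for $n>1$, $H^2(\g_-,\g)$ is concentrated in homogeneity two, while for $n=1$ the homogeneity-one component is killed by the integrability assumption (as in Lemma \ref{total curv homog 1}), so an induction on homogeneity forces all higher components of $\k^{nc}$ to vanish once $\k^{nc(2)}$ does. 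Finally, equality with the tensor $WR$ of \cite{IVlocalsphere} is a direct comparison of formulas using (\ref{homog 2 rho tensor}) and the definition of $L$ in (4.6) of that paper.
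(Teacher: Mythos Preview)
Your proposal is correct and follows essentially the same route as the paper: both derive the formula from the identity $W^{qc(2)}(u,v) = K^{qc(2)}(u,v) + \{\mathsf{P}^{qc(2)}(u),v\} - \{\mathsf{P}^{qc(2)}(v),u\}$ of Theorem \ref{Weyl curvature}, expand via the commutator identities (\ref{CommC})--(\ref{CommJ}), and then invoke the general parabolic theory for conformal covariance and the flatness criterion. Two minor points of difference: the paper makes conformal covariance explicit by quoting the transformation formula from section 4.6 of \cite{CS03} (your $P_+$-equivariance remark is the same idea but less concrete), and the paper notes that matching the displayed formula with $WR$ from \cite{IVlocalsphere} requires the auxiliary identity $L(w,I_rz) - L(I_rw,z) + L(I_sw,I_tz) - L(I_tw,I_sz) = T^0(w,I_rz) - T^0(I_rw,z)$, which your ``direct comparison'' glosses over.
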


\begin{proof} Note that in the formula claimed, our notation is consistent with that used in \cite{IVlocalsphere}: $\star$ denotes the Kulkarni-Nomizu product of two tensors $A,B \in \Gamma(\Dbdle^*\tens\Dbdle^*)$: $$(A \star B)(u,v,w,z) := A(u,w)B(v,z) + A(v,z)B(u,w) - A(v,w)B(u,z) - A(u,z)B(v,w),$$ and for any $a=1,2,3$, we define $A_a$ by $A_a(u,v) := g(I_a \circ A^{\sharp}(u),v) = -A(u,I_a(v))$. Thus, we see that $W^{qc(2)}$ agrees with the tensor $WR$ derived in \cite{IVlocalsphere}, by comparing the above formula with display (4.8) of that text, and using the following identity which may be shown by a direct calculation for any $(r,s,t) \sim (1,2,3)$: $$L(w,I_rz) - L(I_rw,z) + L(I_sw,I_tz) - L(I_tw,I_sz) = T^0(w,I_rz) - T^0(I_rw,z).$$\\

\no The formula given here, on the other hand, can be verified directly with a calculation from the identity $$W^{qc(2)}(u,v) = K^{qc(2)}(u,v) + \{\mathsf{P}^{qc(2)}(u),v\} - \{\mathsf{P}^{qc(2)}(v),u\},$$ contained in theorem \ref{Weyl curvature} cited in section 2.1.\\

\no Now let us explain how the properties of the tensor $W^{qc(2)}$ claimed in the theorem follow from the general parabolic theory. First, since we have identified $W^{qc(2)}$ as the homogeneity two component of the Weyl curvature of $\omega^{qc} = \sigma_g^*\omega^{nc}$, it follows that $W^{qc(2)}$ corresponds to the pull-back via $\sigma_g: \Gbdle_0 \rightarrow \Gbdle$ of the homogeneity two component of the curvature $\Omega^{nc} \in \Omega^2(\Gbdle,\g)$ of the canonical Cartan connection $\omega^{nc}$ (cf. the argument in the proof of theorem 4.4 of \cite{CS03} for this fact). From the discussion in section 2.2, we see that this component of $\Omega^{nc}$ vanishes if and only if all of $\Omega^{nc}$ vanishes.\\

\no As for conformal covariance of $W^{qc(2)}$, considering it as a $(1,3)$-tensor (i.e. $W^{qc(2)}(u,v) \in \mathrm{End}_0(\Dbdle)$ for all $u,v \in \Dbdle$), a general formula for the transformation of the components $W^{qc}_i \in \Omega^2(M;\Abdle_i)$ under a change of Weyl structure $\sigma \mapsto \hat{\sigma} = \sigma \circ R_{\mathrm{exp} \circ \Upsilon_1 \ldots \mathrm{exp} \circ \Upsilon_k}$ for $\Upsilon \in \Gamma(\Abdle_+)$ is shown in section 4.6 of \cite{CS03}. (Note that any two Weyl structures $\sigma$ and $\widehat{\sigma}$ are related in this way, cf. proposition 3.2 of \cite{CS03}.) Translating that formula directly to our situation (where $W^{qc(2)}(u,v) = W^{qc}_0(u,v)$ for $u,v \in \Dbdle$), we get $$\widehat{W^{qc}_0}(u,v) = \sum_{\parallel j \parallel + l= 0} \frac{(-1)^{\underline{j}}}{\underline{j}!} \mathrm{ad}(\Upsilon_2)^{j_2} \circ \mathrm{ad}(\Upsilon_1)^{j_1}(W^{qc}_l(u,v)).$$ Since $W^{qc}_i(u,v) = 0$ for all $i < 0$, the right-hand side of this expression simplifies to $W^{qc}_0(u,v)$. In particular we have $\widehat{W^{qc(2)}} = W^{qc(2)}$ for $\widehat{W^{qc(2)}}$ the homogeneity two component of the Weyl curvature corresponding to a conformal change of the Carnot-Carath\'eodory metric to $\widehat{g} = e^{2\phi}g \in [g]$ (i.e. $\sigma_{g} \mapsto \widehat{\sigma_g} := \sigma_{\widehat{g}}$), and for the corresponding $(0,4)$-tensor given in the theorem we have $\widehat{W^{qc}(2)}(u,v,w,z) = e^{2\phi}W^{qc(2)}(u,v,w,z)$.
\end{proof}

\begin{appendix}

\section{Algebraic commutators for qc manifolds}

Let a qc manifold $(M,\Dbdle,\mathbb{Q},[g])$ of dimension $4n+3$ (assumed integrable in case $n=1$) be given, and a metric $g \in [g]$ as well as a choice of local qc contact form $(\eta^1,\eta^2,\eta^3)$ with Reeb vector fields $\xi_1,\xi_2,\xi_3$ and corresponding local section $(I_1,I_2,I_3)$ of $\mathbb{Q}$. A local section of $\Gbdle_0$ will be given by a (fixed) $g$-orthonormal local basis $\{e_1,e_2,\ldots,e_{4n}\}$, satisfying, for any $\alpha =1,\ldots,n$: $$e_{4\alpha-2} = I_1(e_{4\alpha-3}), \,\, e_{4\alpha-1} = I_2(e_{4\alpha-3}), \,\, e_{4\alpha} = I_3(e_{4\alpha-3}).$$\\

We discuss in sections 2.2 and 3 how such a local section of $\Gbdle_0$ determines pointwise bijections $\Dbdle \leftrightarrow \H^n \isom \g_{-1}$ and $\Vbdle \leftrightarrow \mathrm{Im}(\H) \isom \g_{-2}$. Namely, for $u \in \Dbdle$, write $u = \sum_{a=1}^{4n} u^a e_a$ with respect to the quaternionic unitary basis. We identify $u \leftrightarrow \overline{x} \in \H^n$, with $x = \sum_{\alpha =1}^n x_{\alpha} d_{\alpha}$, for $\{d_1,\ldots,d_n\}$ the standard quaternionic basis of $\H^n$ (considered as column vectors), and $x_{\alpha} := u^{4\alpha-3} + iu^{4\alpha-2} + ju^{4\alpha-1} + ku^{4\alpha} \in \H$. Now, for $\varphi \in \Dbdle^*$, we write $\varphi = \sum_{a=1}^{4n} \varphi_a e^a$, for $\{e^1,\ldots,e^{4n}\}$ the dual basis of $\Dbdle^*$, and get a corresponding pointwise identification by $\Dbdle^* \ni \varphi \leftrightarrow z \in (\H^n)^* \isom \g_{+1}$, for $z = \sum_{\alpha=1}^n z_{\alpha}d^{\alpha}$, where $\{d^{1}, \ldots, d^n\}$ is the dual quaternionic basis of $(\H^n)^*$ ($d^{\alpha} = (d_{\alpha})^t$) and $z_{\alpha} := \varphi_{4\alpha-3} + i\varphi_{4\alpha-2} + j\varphi_{4\alpha-1} +k\varphi_{4\alpha}$.\\

Denoting one-half the real trace form by $B$, note that we have: $$B(\overline{x},z) = B(z,\overline{x}) = \mathrm{Re}(\sum_{\alpha=1}^n z_{\alpha} \cdot \overline{x_{\alpha}}) = \varphi(u).$$ (The identification $\Dbdle^* \leftrightarrow (\H^n)^*$ is chosen precisely so that the natural dual pairing of $\Dbdle$ and $\Dbdle^*$ is compatible with the $B$-dual pairing of $\g_{-1}$ with $\g_{+1}$.) Furthermore, we have $I_s(u) \leftrightarrow \overline{i_s x} = \overline{x} \overline{i_s} = - \overline{x} i_s$ for $s=1,2,3$ and $i_1,i_2,i_3 \in \mathrm{Im}(\H)$ denoting $i,j,k$, respectively (we'll also use the notation $i_0 = 1$ as $I_0 = Id_{\Dbdle}$). So we see also: $$\sum_{\alpha=1}^n z_{\alpha}\overline{x_{\alpha}} = \varphi(u) + i \varphi(I_1(u)) + j \varphi(I_2(u)) + k \varphi(I_3(u)).$$\\

Given an endomorphism of the form $\sum_{s=0}^3 q_s I_s \in \mathrm{End}_0(\Dbdle)$, we identify it with $-\overline{q} \in \H = \R \dsum \sp(1) \hookrightarrow \g_0$, where the quaternion $q := \sum_{s=0}^3 i_s q_s$. For an endomorphism $\Phi_0 \in \sp(\Dbdle,g)$, we identify $\Phi_0$ with $A \in \gl(n,\H)$, where $A = \sum_{\alpha,\beta=1}^n \overline{A_{\alpha \beta}} E_{\alpha \beta}$, for $E_{\alpha \beta} \in \gl(n,\H)$ the matrix which sends $d_{\alpha}$ to $d_{\beta}$ and annihilates all other basis vectors, and
\begin{align*}
A_{\alpha \beta} := g(\Phi_0(e_{4\alpha-3}),&e_{4\beta-3}) + ig(\Phi_0(e_{4\alpha-3}),e_{4\beta-2}) \\
&+ jg(\Phi_0(e_{4\alpha-3}),e_{4\beta-1}) + kg(\Phi_0(e_{4\alpha-3}),e_{4\beta}).
\end{align*}
\no Then it is straightforward to calculate that the Lie bracket in $\g$ of the matrix corresponding to an endomorphism, with the matrix corresponding to a vector in $\Dbdle$, corresponds to the image of the original vector under the endomorphism. I.e. for $\mathrm{End}_0(\Dbdle) \ni \Phi \leftrightarrow A \in \g_0$ and $\Dbdle \ni u \leftrightarrow \overline{x} \in \g_{-1}$, we have $\Dbdle \ni \Phi(u) \leftrightarrow  [A,\overline{x}] \in \g_{-1}$, which means we have: $\{\Phi,u\} = \Phi(u)$.\\

For $u, v \in \Dbdle$ with $u \leftrightarrow \overline{x}$ and $v \leftrightarrow \overline{y}$, we calculate:
\begin{align*}
[\overline{x},\overline{y}] &= 2\sum_{\alpha=1}^n\mathrm{Im}(y_{\alpha}\overline{x_{\alpha}}) \in \g_{-2}\\
&= 2(ig(I_1(u),v) + jg(I_2(u),v) + kg(I_3(u),v)\\
&= id\eta^1(u,v) + jd\eta^2(u,v) + kd\eta^3(u,v).
\end{align*}
\no By regularity, we have to have $$\{u,v\} = [u,v]_{\Vbdle} = -\sum_{s=1}^3d\eta^s(u,v)\xi_s,$$ which is compatible with the identification $\Vbdle \ni \xi_s \leftrightarrow \overline{i_s} \in \mathrm{Im}(\H) \isom \g_{-2}$ given in section 3. This leads to the identification $\Vbdle^* \ni \eta^s \leftrightarrow 2i_s \in (\mathrm{Im}(\H))^* \isom \g_{+2}$, which we choose in order to get the natural compatibility condition $B(\overline{i_s},2i_r) = \delta_s^r = \eta^s(\xi_r)$ for $r,s=1,2,3$.\\

From here, one calculates the commutators of matrix elements of $\g$ resulting from the identifications, to determine the identities for the algebraic bracket induced on $TM \dsum \mathrm{End}_0(\Dbdle) \dsum T^*M$. We summarize these below. Let $u, v \in \Dbdle$, $\varphi, \psi \in \Dbdle^*$, $\Phi = \sum_{s=0}^3 q_s I_s + \Phi_0 \in \mathrm{End}_0(\Dbdle)$, $\xi = \sum_{a=1}^3 a_s \xi_s \in \Vbdle$ and $\eta = \sum_{s=1}^3 b_s \eta^s \in \Vbdle^*$. Then we have:

\begin{align}
\{u,\varphi\} = \varphi(u)I_0 - \sum_{s=1}^3 \varphi(I_s(u))I_s &+ u \wedge \varphi - \sum_{s=1}^3u\alt_{I_s} \varphi \in \mathrm{End}_0(\Dbdle); \label{CommC}\\
\{\xi,\eta\} = 2\sum_{s=1}^3 a_s b_s I_0 - 2&\sum_{(1,2,3)} (a_r b_s - a_s b_r)I_t \in \mathrm{End}_0(\Dbdle); \label{CommF}\\
\{\xi,\varphi\} = \sum_{s=1}^3 &a_s I_s(\varphi^{\sharp}) \in \Dbdle; \label{CommE}\\
\{u,v\} = [u,v]_{\Vbdle} = -2\sum_{s=1}^3 g(&I_s(u),v)\xi_s = -\sum_{s=1}^3d\eta^s(u,v)\xi_s \in \Vbdle; \label{CommB}\\
\{\Phi,\xi\} = \{\sum_{s=0}^3q_sI_s,\xi\} = &2q_0\xi + 2\sum_{(1,2,3)}(q_r a_s - q_s a_r)\xi_t \in \Vbdle; \label{CommD}\\
\{\Phi,u\} = \Phi(u) \in \Dbdle; \,\, \{\varphi,\psi\} = -&\sum_{s=1}^3 \varphi(I_s(\psi^{\sharp})) \eta^s; \,\, \{v,\eta\} = 2\sum_{s=1}^3 b_s g(I_s v, .); \label{CommH}\\
\{\Phi,\varphi\} = - \varphi \circ \Phi; \,\, \{I_r,\eta^s\} &= -\{I_s,\eta^r\} = 2 \eta^t; \, \, \{I_r,\eta^r\} = 0. \label{CommJ}
\end{align}

\no In the above formulae, $\sum_{(1,2,3)}$ denotes the sum over all cyclic permutations of $(1,2,3)$. The endomorphisms $u \alt_{I_s} \varphi$ are defined, for $s=0,1,2,3$ ($I_0$ is simply omitted above) and any $v \in \Dbdle$, by:
$$(u \alt_{I_s} \varphi): v \mapsto \varphi(I_s(v))I_s(u) - g(u,I_s(v))I_s(\varphi^{\sharp}).$$

\end{appendix}

\no \begin{small}SCHOOL OF MATHEMATICS, UNIVERSITY OF THE WITWATERSRAND, P O WITS 2050, JOHANNESBURG, SOUTH AFRICA.\end{small}\\
\no E-mail: \verb"jesse.alt@wits.ac.za"

\end{document}